\renewcommand{\div}{\operatorname{div}}
\def\R{\mathbb{R}}
\def\1B{{\bf  1}}
\newcommand{\NN}{\mathbb{N}}
\newcommand{\RR}{\mathbb{R}}
\def\EE{\mathbb{E}}
\newcommand\be{\begin{equation}}
\newcommand\ee{\end{equation}}
\newcommand\ba{\begin{array}}
\newcommand\ea{\end{array}}
\newcommand{\bean}{\begin{eqnarray*}}
\newcommand{\eean}{\end{eqnarray*}}
\newtheorem{lemma}{\textbf{Lemma}}[section]
\newtheorem{theorem}{\textbf{Theorem}}[section]
\numberwithin{equation}{section}
\newcommand{\du}{\mathbf{d}_1}
\newcommand{\into}{\int_\Omega}
\newcommand{\dbc}[1]{{#1}_{|x\in\partial\Omega}=0}
\newcommand{\nbc}[1]{{A(x)\nabla#1\cdot\nu(x)}_{|x\in\partial\Omega}=0}
\newcommand{\norm}[1]{\ensuremath{\left\Arrowvert #1 \right\Arrowvert}}
\newcommand{\norminf}[1]{\ensuremath{\left\Arrowvert #1 \right\Arrowvert_\infty}}
\newcommand{\conds}{\eqref{cauchy}, \eqref{dirichlet} or \eqref{neumann}}
\newcommand{\cc}{\eqref{cauchy}}
\newcommand{\cd}{\eqref{dirichlet}}
\newcommand{\cn}{\eqref{neumann}}
\newcommand{\ccs}{\eqref{cauchy} }
\newcommand{\cds}{\eqref{dirichlet} }
\newcommand{\cns}{\eqref{neumann} }
\newcommand{\amu}{_{\frac{1+\alpha}{2},1+\alpha}}
\newcommand{\inti}{\int_0^t\int_{\Omega}}
\newcommand{\trm}[1]{\sum\limits_{ij}\partial^2_{ij}\big(A_{ij}(x)#1\big)}
\DeclareMathOperator{\tr}{tr}
\begin{document}

\title[Forward-Forward Mean Field Games in mathematical modeling]{Forward-Forward Mean Field Games in mathematical modeling with application to opinion formation and voting models. }

\author*[1]{\fnm{Adriano} \sur{Festa}}\email{adriano.festa@polito.it}

\author[2]{\fnm{Simone} \sur{G\"ottlich}}\email{goettlich@uni-mannheim.de}
\equalcont{These authors contributed equally to this work.}

\author[3]{\fnm{Michele} \sur{Ricciardi}}\email{ricciardim@luiss.it}
\equalcont{These authors contributed equally to this work.}

\affil*[1]{\orgdiv{Dipartimento di Scienze Matematiche "Giuseppe Luigi Lagrange"}, \orgname{Politecnico di Torino}, \orgaddress{\street{Corso Duca degli Abruzzi, 24}, \city{Torino}, \postcode{10129},  \country{Italy}}}

\affil[2]{\orgdiv{Department of Mathematics}, \orgname{University of Mannheim}, \orgaddress{\street{A5-6}, \city{Mannheim}, \postcode{68131},  \country{Germany}}}

\affil[3]{\orgdiv{Department of Economics and Finance}, \orgname{Università Luiss Guido Carli}, \orgaddress{\street{Villa Blanc Via Nomentana, 216}, \city{Roma}, \postcode{00162}, \country{Italy}}}

\abstract{While the general theory for the terminal-initial value problem in mean-field games is widely used in many models of applied mathematics, the modeling potential of the corresponding forward-forward version is still under-considered. In this work, we discuss some features of the problem in a quite general setting and explain how it may be appropriate to model a system of players that have a complete knowledge of the past states of the system and are adapting to new information without any knowledge about the future. Then we show how forward-forward mean field games can be effectively used in mathematical models for opinion formation and other social phenomena.}

\keywords{Mean-Field Games; Hamilton-Jacobi equations; Fokker-Planck equations; Social mathematical models}



\maketitle

\section{Introduction}
Mean Field Games (MFGs) are models for large populations of competing rational agents who seek to optimize an individual objective function. They were introduced for the first time in 2006 in \cite{LasryLions07}. The most investigated form has a \textit{forward-backward} structure that can be sketched by the following system of partial differential equations: 
\begin{equation}\label{systemMFG}
\left\{
\begin{array}{ll}
-u_t(t,x)-\tr(A(x)D^2 u(t,x))+H(x,\nabla u(t,x))=F(t,x,m(t)),\\
m_t(t,x)-\sum\limits_{ij}\partial^2_{ij}(A_{ij}(x)m(t,x))-\mathrm{div}(H_p(x,\nabla u(t,x))m(t,x))=0.
\end{array}\right.
\end{equation}
Here, a density of individuals $m$ moves in the direction determined by the potential function $u$.  In the system above, $A$ is a diffusion matrix, $D^2$ is the Hessian, $\nabla$ is the usual space gradient and $H_p$ is the gradient w.r.t. the variable $p$ of the Hamiltonian operator $H(x,p)$. The notation $\partial^2_{ij}$ stands for the second-order derivatives with respect to the space coordinates $x_i$,$x_j$, finally, in $F$ the density $m(t):=m(t,\cdot)$, i.e. the coupling may be nonlocal.

To give a hint of the idea behind the model, the Hamilton-Jacobi equation (HJ) contained in the first line of the system solves, backward in time, an optimal control problem relative to the strategic choice of any individual. This information is used, forward in time, to move the density $m$ accordingly to the second equation of \eqref{systemMFG}.

The existence and uniqueness of equilibrium solutions of \eqref{systemMFG} can be found, for example, in \cite{Cardialaguet10,GomSau14}. Other researches on this subject are \cite{gomes2015regularity}, where strong solutions for parabolic problems were considered, or \cite{porretta2015weak}, for weak solutions of parabolic problems with Dirichlet and Neumann boundary conditions. In \cite{cardaliaguet2015weak} the authors discuss the existence of weak solutions for first-order MFGs.  The stationary case was also investigated in detail and it was first considered in  \cite{LasryLions07}.  Generally, the uniqueness of solution is obtained (both for stationary and time-dependent MFGs) via a monotonicity argument introduced in \cite{LasryLions07}.

\medskip
Another model closely related to our discussion is Hughes' model \cite{hughes,carlini2016DGA}. In this model, the optimization process of each player relies solely on information about the current state of the system, without any anticipation of future states. Consequently, the density evolution is associated with a stationary HJ that incorporates only the present configuration of the density. This model has been demonstrated to be suitable for effectively simulating scenarios in which agents may change their strategy due to unforeseen events. Notably, they are prone to changing their strategy instantaneously, with no consideration given to their past behavior. This modeling approach is particularly useful for simulating large crowd fluxes of pedestrians.

A version of a MFG system, in a forward-forward form, has been proposed in \cite{achdou2010mean}, as effective way to approximate the stationary state of the system, and in \cite{Diogo2018,gomes2016one}, in its one-dimensional form, mostly with the purpose of studying the qualitative properties of the system. 

\medskip

Our paper aims to investigate forward-forward Mean Field Games (FF-MFGs) from a modeling perspective. We seek to demonstrate that FF-MFGs are not merely effective tools for approximating stationary problems or theoretical curiosities; rather, they represent a potent and, in some cases, more suitable model for simulating collective processes. In this framework, agents are influenced by past states of the system, as they lack access to information about the future. Instead, their current strategic choices are significantly shaped by the collective past configurations. These distinctive features can be leveraged effectively to model collective motion phenomena, where factors such as tradition and stubbornness play a crucial role. Examples include opinion formation and voters' tendencies.

\medskip
We present a concise overview of the paper's content: In Section \ref{s1}, we formally derive a class of FF-MFGs, providing a stochastic interpretation in terms of optimal control for stochastic differential equations. Section \ref{s2} contains general results on existence and uniqueness, demonstrating their broad applicability. We emphasize the generality of these findings, showcasing proofs for solutions in the Cauchy problem and cases involving Dirichlet and Neumann boundary conditions.

While we do not explore periodic solutions in this work, specifically when the state space $\Omega=\mathbb T^d$, it can be demonstrated more straightforwardly compared to other cases. In Section \ref{s3}, we introduce and discuss two FF-MFG models for social processes, accompanied by various numerical simulations that practically illustrate their features. The numerical tools employed throughout the paper are briefly outlined in Appendix \ref{app:num.tools}.

\section{Derivation and interpretation of the evolution process}\label{s1}

Let us denote by $X_s \in \R^n$ the quantity of interest (e.g. opinion, ideological position on a political spectrum, etc.) at time $s$, which follows the stochastic differential equation (SDE)
\begin{equation}\label{dynamicsX} 
\left\{\begin{array}{ll}
d  X_s= \beta(s,X_s) d  s +  \sqrt{2}\sigma(X_s)\,dB_s, &  s\in (t,T), \\
X_t=x, & 
\end{array}\right.
\end{equation}
for some $x \in \R^n$. Here, $\sigma(X_s)$ is a $n\times n$ matrix, and $B_s$ is a standard $n$-dimensional Brownian motion.  We define the quantity $m$ as the probability density of players moving accordingly to the SDE \eqref{dynamicsX}
and initial distribution $m(0,x)= m_0(x) \in \mathcal{P}(\R^n)$. Classical arguments  in diffusion theory imply that the measures $m$ of the initial distribution $m_0$ will be characterized by the dynamics given by the Fokker-Planck (FP) equation 
\begin{equation}\label{FPEquation}
\left\{\begin{array}{ll}
m_t(t,x) -\sum\limits_{ij}\partial^2_{ij}(A_{ij}(x)m(t,x))+\mbox{div}\big(\beta(t,x)\, m(t,x)  \big) = 0,\\
m(0,x)=m_0(x)&
\end{array}\right.
\end{equation}
with $A(x):=\sigma(x)\sigma^\mathsf{T}(x)$. The latter has to be understood in the weak sense \cite{Cardialaguet10}. 

The crux of the model lies in the selection of the drift, denoted as $\beta$. We make the assumption that the players \emph{are uninformed} about future states of the system (i.e., $m(t, x)$ for $t\in[t, T]$). However, they assume that certain tendencies from the past may reemerge in the future. This scenario is particularly relevant in domains like political opinions or voting intentions, where fluctuations around specific poles of attraction often manifest. In such instances, recent past events exert a more pronounced influence on individual choices, whereas distant past events, wield a comparatively limited impact.

With the latter in mind, for any fixed $\bar t\in(0,T]$ consider the optimal control problem
\begin{multline}
\label{representacionestocastica}
v(\tau,x)= \inf_{\alpha} \;\left\{ \mathbb E \Big[\int_{\tau}^{2\bar t}e^{-\lambda(s-\tau)}\left[\ell(s,Y^{x, \tau}[\alpha](s), \alpha(s,Y^{x,\tau}[\alpha](s)))\right.\right.\\
\left.\left.+ F(s,Y^{x,\tau}[\alpha](s),m(2\bar t-s)) ]d  s +e^{-\lambda (2\bar t-\tau)}u_0(Y^{x,\tau}[\alpha](2\bar t))\right]\right\},
\end{multline}
where $Y^{x,\tau}[\alpha](s):=Y_s$ with $s \in [\tau,2\bar t]$ is assumed to be the unique strong solution to 
\begin{equation}\label{dynamicsY} 
\left\{\begin{array}{ll}
d  Y_s= \alpha(s,Y_s) d  s +  \sqrt{2}\sigma(Y_s)\,dB_s, &  s\in (\tau,2\bar t), \\
Y_\tau=x, & 
\end{array}\right.
\end{equation}
$\ell \colon \R^n\times \R^n \to \R$ and $F\colon \R^n\times\mathcal{P}(\R^n) \to \R$ are running cost functions, and $\lambda\in [0,+\infty)$ is a time discount factor. Defining a Hamiltonian function $\tilde H$ as 
\begin{equation}\label{hamiltonian}
\tilde H(\tau,x,p):=\max_{\alpha\in \R^n}\left\{-\alpha \cdot p-\ell(\tau,x,\alpha)\right\},
\end{equation}
the optimal solution of the optimization problem in \eqref{representacionestocastica} is given in feedback form by $\alpha(\tau,x)= -\tilde H_p(\tau,x, \nabla v(\tau,x))$.
Moreover, a classic Dynamic Programming Principle (see \cite{BCD97}) applies to the value function $v(s,x)$ in \eqref{representacionestocastica} bringing us a differential representation formula for $v(s,x)$ in HJ form as 
\begin{equation*} 
-v_\tau(\tau,x)-\mbox{tr} (A(x) D^2v(\tau,x))+\tilde H(\tau,x,\nabla v(\tau,x))+\lambda v(\tau,x)=\tilde F(\tau,x,m(2\bar t-\tau)),
\end{equation*}
for $ \tau\in[\bar t,2\bar t)$, provided with the terminal condition $v(2\bar t,x)=u_0(x)$. 

We can interpret the function $v(\tau, x)$ as the value function for an agent who engages in an optimal control problem at every instant $\bar t \in [0, T]$ within the time interval $[\bar t, 2\bar t]$. In the absence of knowledge about the future states of the system, the agent replaces them by mirroring the past states of the time interval $[0, \bar t]$, which are assumed to be known. The discount coefficient $\lambda$ gauges the significance of the past in this decision-making process. Therefore, we anticipate that as $\lambda$ approaches $+\infty$, the models will converge towards an instantaneous response of the system, where trajectory optimization is not performed.

Considering the special nature of the coupling between $v$ and $m$ it is convenient to reverse the time evolution: if we define $v(\tau,x)=u(2\bar t-\tau,x)$ and we move the time interval in $[0,\bar t]$ (with the substitution $t=2\bar t-\tau$) we obtain 
\begin{equation*} 
\left\{\begin{array}{ll}
u_{t}(t,x)-\mbox{tr} (A(x) D^2 u(t,x))+ H(t,x,\nabla u(t,x))+\lambda u(t,x)= F(t,x,m(t)), \quad t\in(0,\bar t] \\
u(0,x)=u_0(x)\,,
\end{array}\right.
\end{equation*}
which holds for any $\bar t\in[0,T]$ and therefore in particular for $\bar t=T$. Here, $ H$ and $ F$ are defined in the following way:
$$
 H(t,x,p)=\tilde H(2\bar t-t,x,p)\,,\qquad  F(t,x,r)=\tilde F(2\bar t-t,x,r)\,.
$$
We then set $\beta(t,x)=- H_p(t,x,\nabla u(t,x))$: the latter equation, coupled with \eqref{FPEquation}, gives a Forward-Forward MFG system.

\medskip

Clearly, the motion of the density and the optimal control do not follow the same trajectories. Therefore, similar to the Hughes' model, this framework cannot be accurately labeled as a \emph{game}. This is a critical aspect that enables our model to describe potential changes in strategy and less rational behaviors among the players, as in the case of high stubbornness and ideological bias. However, we maintain the term FF-MFGs for consistency with the existing literature.

\medskip
The model can be defined in a bounded open subset of $\mathbb{R}^n$; in such cases, additional boundary conditions are necessary. A stochastic interpretation can be provided for Dirichlet and Neumann boundary conditions: for Dirichlet boundary conditions, the player's trajectory must be halted upon reaching the boundary. In the case of Neumann boundary conditions, a reflecting process at the boundary is incorporated into the stochastic differential equation \eqref{dynamicsX}, as detailed in \cite{sznitman}. However, we do not delve into these interpretations here, as they involve a readaptation of the backward-forward MFG developed in \cite{MED,convergenceneumann}.

\section{Some results about Forward-Forward Mean Field Games}\label{s2}
The forward-forward model was introduced  in \cite{achdou2010mean} to  approximate stationary MFGs.
The key insight is that the parabolicity in (\ref{MFG}) should imply the long-time convergence to a stationary solution. 

\medskip
In this section, we want to prove existence and uniqueness of classical solutions for the forward-forward MFGs. In particular we discuss the existence and the uniqueness of the FF-MFG general framework which includes the models that we are going to discuss in the rest of the paper. We consider the  {\em forward-forward MFG} problem described by the system
\begin{equation}
\label{MFG}
\left\{
\begin{array}{ll}
	\displaystyle u_t-\mathrm{tr}(A(x)D^2 u)+H(t,x,\nabla u)+\lambda u=F(t,x,m)\,, & (t,x)\in (0,T)\times\Omega\\
	\displaystyle m_t-\trm{m}-\mathrm{div}\big(mH_p(t,x,\nabla u)\big)=0\,, & (t,x)\in (0,T)\times\Omega\\
	\displaystyle u(0,x)=u_0(x)\,,\qquad m(0,x)=m_0(x)\,, & x\in\Omega\,.
\end{array}\right.
\end{equation}
Here, $\Omega\subseteq\R^n$ is an open domain. Note that in this section we consider the case $\lambda\ge0$. This is done just to simplify the presentation. The presence of a negative smoothing term is not a problem and the same results hold, since the function $z(t,x):=e^{(|\lambda|-\lambda) t}u(t,x)$ solves the same Hamilton-Jacobi equation with $\lambda$ replaced by $|\lambda|$. We distinguish between three cases, giving rise to three different kind of problems. Namely, either
\begin{equation}\label{cauchy}
\Omega=\R^n\tag{C}\,,\mbox{ no boundary conditions prescribed,}
\end{equation}
which is known as \emph{Cauchy problem}, or we require $\Omega$ to be a bounded domain (the regularity of such domain is stated in the hypotheses) complementing the system with homogeneous \emph{Dirichlet} or \emph{Neumann} boundary conditions, i.e.
\begin{equation}\label{dirichlet}
u(t,x)_{|x\in\partial\Omega}=0\,,\qquad m(t,x)_{|x\in\partial\Omega}=0\,,\tag{D}
\end{equation}
or, for $x\in\partial\Omega$,
\begin{equation}\label{neumann}
A(x)\nabla u(t,x)\cdot\nu(x)=0\,,\qquad \Big[\big(\sum\limits_j\partial_j(A_{ij}(x)m)\big)_i+m H_p(t,x,\nabla u)\Big]\cdot\nu_{|\partial\Omega}=0\,.\tag{N}
\end{equation}

Recall that in \eqref{hamiltonian} we have $H(t,x,p)=\max\limits_{\alpha\in\R^n}\big\{-\alpha\cdot p - \ell(2T-t,x,\alpha)\big\}$. Nevertheless, the results given in this section are more general and may include other kind of Hamiltonian.

The existence and the long-time convergence of the forward-forward model have not been addressed, except in a few cases, see  \cite{gomes2015regularity}
and  \cite{gueant2011mean}. In \cite{gueant2011mean}, the  forward-forward problem was examined in the context of eductive stability of stationary MFGs with a logarithmic coupling.  In \cite{gomes2015regularity}, the existence and regularity of solutions for the forward-forward, uniformly parabolic MFGs with subquadratic Hamiltonians was proven.
A general result for existence and uniqueness of solutions for systems of parabolic equations was given, just with Dirichlet boundary conditions, in \cite[Theorem VII.7.1]{ladyzenskaja1968linear}. Hence, the results provided in this section can be considered as a generalization of that (since we include also the cases \eqref{neumann} and \eqref{cauchy}) of that. Moreover, the proof was not given there, but the techniques used (fixed-point method) are the same we use in this paper.

Except for these cases, the question of existence and regularity is open  in all other regimes.  In the case of forward-forward MFGs without viscosity, these questions are particularly challenging. Moreover, the long-time convergence has not been established even in the parabolic case. Nevertheless, numerical results in \cite{achdou2017mean} indicate that convergence holds and that the forward-forward model approximates well stationary solutions.

\medskip 
Let $\mathcal{P}(\Omega)$ (resp. $\mathcal P^{sub}(\Omega)$) be the set of probability measures (resp. sub-probability measures) on $\Omega$ with finite first moments. Then, $\mathcal{P}(\Omega)$ is a topological space endowed with the weak*-convergence and metrizable with respect to the \emph{Wasserstein distance}, which is defined as the distance between two probability measures $m_1,m_2\in \mathcal{P}(\Omega)$ as
\[
\du(m_1,m_2) =\inf_{(X,Y)\in \Pi(m_1, m_2)} \EE\left[|X-Y|\right]
\]
Here $\Pi(m_1, m_2)$ is the set of all joint probability distributions $(X,Y)$ with $\mathscr L(X_1)=m_1$, $\mathscr L(X_2)=m_2$. An equivalent definition is the following one:
\begin{align*}
\du(m_1,m_2)&:=\sup_{g\in Lip_1(\Omega)} \left\{\into g(x)d(m_1-m_2)(x)\right\},
\end{align*}
where with $Lip_1(\Omega)$ we mean the space of Lipschitz continuous functions with Lipschitz constant equal to one. The equivalence of the two definitions was given in \cite{Cardialaguet10}.

The same definition applies in case of \eqref{dirichlet}, with $\mathcal P(\Omega)$ replaced by $\mathcal P^{sub}(\Omega)$ and test function $g$ with $Lip(g)\le 1$ and $\norminf{g}\le 1$, see \cite{Piccoli2}. This is called \emph{generalized Wasserstein distance}.

We observe that, in case of \eqref{neumann} and \eqref{cauchy}, we can restrict to test functions $g$ with $g(x_0)=0$, for a fixed $x_0\in\Omega$. In particular, in case of Neumann condition we have $\norminf{g}\le C$, where $C=diam(\Omega)$, and in case of the Cauchy problem we have $|g(x)|\le|x|$.

Henceforth, when there is no possibility of mistake, we use the notation $\mathbf{d_1}$ to refer to both Wasserstein distance and generalized Wasserstein distance.

\medskip 
We start giving the basic assumptions that we need for an existence and uniqueness result.
\paragraph{Hypotheses:}
Let $\gamma\in(0,1)$ and $C>0$. We assume that, in the cases \eqref{dirichlet} and \eqref{neumann}, $\Omega$ is a bounded domain with $\mathcal C^{2+\gamma}$ boundary. Moreover, we suppose that
\begin{enumerate}
	\item [(H0)]\label{H0} $u_0\in\mathcal{C}^{2+\gamma}(\Omega)$, $m_0\in\mathcal{P}(\Omega)$, or $\mathcal{P}^{sub}(\Omega)$ in the case \eqref{dirichlet}, with a $\mathcal C^{2+\gamma}$ density, called also $m_0$. Moreover $A(\cdot)\in\mathcal{C}^{2}(\overline\Omega)$, $A=\sigma\sigma^\mathsf{T}$ with $\sigma\in Lip(\Omega;\R^{n\times n})$ and for certain $\mu_2>\mu_1>0$ independent of $x$
	$$
	\mu_1|\xi|^2\le \langle A(x)\xi,\xi\rangle\le \mu_2|\xi|^2\qquad\forall\,\xi\in\R^n;
	$$
	\item [(H1)]\label{H1} $H:[0,T]\times\Omega\times\R^n\to\R$, $F:[0,T]\times\Omega\times\mathcal P(\Omega)\to\R$ (or $F:[0,T]\times\Omega\times\mathcal P^{sub}(\Omega)\to\R$ in the case \eqref{dirichlet}) are continuous functions and $\mathcal{C}^{\frac\gamma2,\gamma}$ in $(t,x)$ variable, with $H$ differentiable and convex in the last variable, and $H(\cdot,\cdot,0)$ is globally bounded. Moreover, the growth of $H$ is at most quadratic:
	$$
	|H(t,x,p)|\le C(1+|p|^2)\,,\qquad |H_p(t,x,p)|\le C(1+|p|)\,,\qquad |H_{pp}(t,x,p)|\le C\,;
	$$
	\item [(H2)]\label{H2} For $\mu_1,\mu_2\in\mathcal{P}(\Omega)$ or $\mathcal P^{sub}(\Omega)$, the following condition hold:
	\begin{equation*}
		\begin{split}
  & |F(t,x,\mu_1)-F(t,x,\mu_2)|\le C\du(\mu_1,\mu_2)\,;
		\end{split}
	\end{equation*}
	\item [(H3)]\label{H3} In case of boundary conditions, we require compatibility conditions for the initial data: namely, in the setting \eqref{dirichlet}, we require
	$$
	\dbc{u_0(x)}\,,\qquad \dbc{m_0(x)}\,,
	$$
	whereas in the setting \eqref{neumann}
	$$
	\nbc{u_0(x)}\,,\qquad \big[A(x)\nabla m_0+ m_0H_p(0,x,\nabla u_0)\big]\cdot\nu(x)=0\,.
	$$
	In the setting \cc, we require instead a finite first order moment for $m_0$, namely
	$$
	\int_{\R^n}|x|\,m_0(dx)\le C\,.
	$$
\end{enumerate}

\medskip

From now on, $C$ indicates a non-negative constant, that can change from line to line.
\begin{theorem} \label{T1}
Let Hypotheses $(H1)-(H3)$ hold true. Then there exists a unique solution $(u,m)\in\mathcal{C}^{1+\frac\gamma2,2+\gamma}([0,T]\times \overline\Omega)\times\mathcal C^{1+\frac\gamma2,2+\gamma}([0,T]\times \overline\Omega)$ for the FF-MFG system \eqref{MFG}.
\end{theorem}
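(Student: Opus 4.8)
The plan is to establish existence via a Schauder fixed-point argument on the Fokker-Planck component, and uniqueness via the classical Lasry-Lions monotonicity trick, adapted to the forward-forward coupling. I would work throughout in the parabolic H\"older spaces $\mathcal C^{\frac\gamma2,\gamma}$ and $\mathcal C^{1+\frac\gamma2,2+\gamma}$, using the fact that under hypotheses (H0)--(H3) both equations in \eqref{MFG} are uniformly parabolic with $\mathcal C^2$ principal part, so that the linear Schauder theory (interior and up-to-the-boundary, the latter using the $\mathcal C^{2+\gamma}$ regularity of $\partial\Omega$ and the compatibility conditions (H3)) is available in all three settings \eqref{cauchy}, \eqref{dirichlet}, \eqref{neumann}.

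First I would set up the fixed-point map. Given $\mu\in\mathcal C^0([0,T];\mathbf{d_1})$ — a continuous curve of (sub-)probability measures with uniformly bounded first moment — solve the decoupled Hamilton-Jacobi equation $u_t-\mathrm{tr}(A D^2u)+H(t,x,\nabla u)+\lambda u=F(t,x,\mu(t))$ with initial datum $u_0$ and the relevant boundary condition. The key a priori estimate here is a gradient bound: using the Bernstein method together with the at-most-quadratic growth in (H1) and $\lambda\ge0$, one obtains $\|u\|_{\mathcal C^{1+\frac\gamma2,2+\gamma}}\le C$ with $C$ independent of $\mu$ (since $F$ is bounded on the range of the fixed-point set by (H1)--(H2) and the boundedness of first moments). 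Then plug $b(t,x):=-H_p(t,x,\nabla u(t,x))$, which by (H1) is $\mathcal C^{\frac\gamma2,\gamma}$ with a uniform bound, into the Fokker-Planck equation and solve for $m$ with datum $m_0$; mass is conserved in cases \eqref{cauchy}, \eqref{neumann} and non-increasing in \eqref{dirichlet}, and a moment estimate (multiply by $|x|$, use the linear growth of $b$, Gronwall) keeps the first moment bounded on $[0,T]$. This defines $\Phi(\mu)=m$. One checks $\Phi$ maps a suitable closed convex set $\mathcal K\subset \mathcal C^0([0,T];\mathbf{d_1})$ into itself, and that $\Phi(\mathcal K)$ is precompact — the parabolic estimate on $m$ gives equicontinuity in time in the $\mathbf d_1$-metric — and that $\Phi$ is continuous (stability of the HJ solution in $\nabla u$ with respect to $\mathbf d_1$-perturbations of $\mu$, via (H2) and linear Schauder estimates, then stability of the linear FP equation with respect to its drift). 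Schauder's theorem then yields a fixed point $m=\Phi(m)$, and bootstrapping gives $(u,m)\in\mathcal C^{1+\frac\gamma2,2+\gamma}\times\mathcal C^{1+\frac\gamma2,2+\gamma}$.

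For uniqueness, suppose $(u_1,m_1)$ and $(u_2,m_2)$ are two solutions. Form the usual cross-quantity: multiply the difference of the HJ equations by $m_1-m_2$, the difference of the FP equations by $u_1-u_2$, subtract, and integrate over $(0,T)\times\Omega$. The second-order terms cancel by self-adjointness of $\mathrm{tr}(A D^2\cdot)$ against its formal adjoint (the boundary terms vanish by \eqref{dirichlet} or \eqref{neumann}, or by decay in \eqref{cauchy}), the Hamiltonian terms combine into $\int m_i\big(H(t,x,\nabla u_j)-H(t,x,\nabla u_i)-H_p(t,x,\nabla u_i)\cdot(\nabla u_j-\nabla u_i)\big)\ge0$ by convexity of $H$ in $p$, and one is left with a Gronwall-type inequality involving $\int_0^T\!\!\int_\Omega (F(t,x,m_1)-F(t,x,m_2))(m_1-m_2)$. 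Here the forward-forward structure helps rather than hurts: the time-boundary contributions at $t=0$ vanish because $u_1-u_2$ and $m_1-m_2$ both vanish there, and at $t=T$ we get $\int_\Omega (u_1-u_2)(T)(m_1-m_2)(T)$, which — unlike in the backward-forward case — does not come with a favorable sign but can be controlled because $(u_1-u_2)(0)=0$ lets us estimate $(u_1-u_2)(T)$ and $(m_1-m_2)(T)$ in terms of the discrepancy accumulated on $[0,T]$; combined with the Lipschitz bound (H2), $|F(t,\cdot,m_1)-F(t,\cdot,m_2)|\le C\mathbf d_1(m_1,m_2)$, and the continuous dependence of the HJ/FP solutions on data, a Gronwall argument on $\sup_{s\le t}\mathbf d_1(m_1(s),m_2(s))$ closes to give $m_1\equiv m_2$, whence $u_1\equiv u_2$.

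The main obstacle I anticipate is the uniqueness step rather than existence. In the standard Lasry-Lions setting the monotonicity of $F$ in $m$ together with the terminal-initial coupling produces a clean sign; here, with only a Lipschitz assumption (H2) (not monotonicity) and a forward-forward coupling that gives no sign at $t=T$, one must instead run a quantitative contraction/Gronwall estimate, and making the dependence of $\|\nabla u_1-\nabla u_2\|$ on $\mathbf d_1(m_1,m_2)$ fully rigorous — uniformly in $t$ and up to the boundary in the \eqref{dirichlet}/\eqref{neumann} cases, and with the moment control substituting for compactness in the \eqref{cauchy} case — is the delicate part. A secondary technical point is verifying that the compatibility conditions (H3) are exactly what is needed for the up-to-the-boundary $\mathcal C^{1+\frac\gamma2,2+\gamma}$ regularity in the Dirichlet and Neumann problems, which is where the argument genuinely uses the $\mathcal C^{2+\gamma}$ smoothness of $\partial\Omega$.
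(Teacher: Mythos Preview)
Your existence argument is essentially the paper's: Schauder fixed point on the measure component, uniform $\mathcal C^{1+\frac\gamma2,2+\gamma}$ bounds on $u$ independent of $\mu$ via parabolic Schauder theory, then compactness and continuity of $\Phi$. The paper differs only in technical packaging (it works on $\frac12$-H\"older curves in $\mathbf d_1$ and, in the Cauchy case, establishes compactness through the stochastic representation of $m$), so that part of the plan is sound.

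The uniqueness proposal, however, has a genuine gap. The Lasry--Lions cross-multiplication does not produce a usable identity in the forward-forward setting: if you \emph{subtract} (as you write) the two products, then the time contribution $z_t\mu-\mu_t z$ is not a total derivative, so you never obtain the clean boundary term $\int_\Omega(u_1-u_2)(T)(m_1-m_2)(T)$ you invoke; if instead you \emph{add} so as to form $\frac{d}{dt}\int z\mu$, then the Hamiltonian contribution no longer rearranges into the nonnegative convexity remainder $\sum_i\int m_i\big(H(\nabla u_j)-H(\nabla u_i)-H_p(\nabla u_i)\cdot\nabla(u_j-u_i)\big)$. In short, in the forward-forward case you cannot have both the boundary structure and the convexity sign at once, so the monotonicity trick collapses. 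The paper's uniqueness argument is different and does not touch convexity of $H$ at all: with $z=u_1-u_2$, $\mu=m_1-m_2$, it multiplies the FP equation for $\mu$ by $\mu$ itself to get an $L^2$ energy estimate $\sup_{[0,t]}\|\mu\|_{L^2}^2+\int_0^t\|\nabla\mu\|_{L^2}^2\le C\int_0^t\|\nabla z\|_{L^\infty([0,s]\times\Omega)}^2\,ds$, then observes that $z$ solves a \emph{linear} parabolic equation with right-hand side $F(\cdot,m_1)-F(\cdot,m_2)$ and applies a linear Schauder bound to get $\|\nabla z\|_{L^\infty([0,s]\times\Omega)}\le C\sup_{[0,s]}\mathbf d_1(m_1,m_2)$; chaining these two estimates closes by Gronwall (in case \eqref{cauchy} the energy step is replaced by a pathwise estimate on $\mathbb E|X^1_t-X^2_t|^2$). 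Your final paragraph already gestures at exactly these two ingredients --- the Gronwall on $\sup_{s\le t}\mathbf d_1(m_1(s),m_2(s))$ and the dependence of $\|\nabla z\|$ on $\mathbf d_1$ --- so the fix is to discard the cross-multiplication entirely and run those two estimates directly.
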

\begin{proof}
For $M>0$ which will be chosen later, we consider the following set:
$$
X=\left\{ \mu\in\mathcal C([0,T];V)\quad|\quad d_V(\mu(t),\mu(s))\le M|t-s|^{\frac12} \right\}\,,
$$
where in case of \eqref{cauchy} or \eqref{neumann}, $V=\mathcal P(\Omega)$ and $d_V=\du$, otherwise in case of \eqref{dirichlet} $V$ is the dual of $\left\{\phi\in\mathcal C^1(\overline\Omega)\,|\,\dbc{\phi}\right\}$, with $d_V$ the classical distance induced by the duality norm:
$$
d_V(\mu,\theta)=\norm{\mu-\theta}_V=\sup\limits_{\substack{\norm{\phi}_{\mathcal C^1}\le 1\\\phi_{|\partial\Omega=0}}}\langle \mu-\theta,\phi\rangle\,.
$$
Note that $X$ is closed and convex in the topology inherited by $\mathcal C([0,T];V)$. In order to apply Schauder's Theorem, we define a functional $\Phi:X\to X$ in the following way: for $\mu\in X$, we take $u=u_\mu$ as the solution of
\begin{equation*}
	\begin{cases}
		u_t-\tr(A(x)D^2 u)+H(t,x,\nabla u)+\lambda u=F(t,x,\mu(t))\,,\\
		u(0)=u_0\,,
	\end{cases}
\end{equation*}
with conditions \conds.

In all the three cases, a solution $u$ is unique and regular. In case of boundary conditions \eqref{dirichlet} and \cn, from the comparison principle we have $\norminf{u}$ uniformly bounded. Then from \emph{Theorem V.4.1} and \emph{Lemma VI.3.1} (resp. \emph{Theorem V.7.2}) of \cite{ladyzenskaja1968linear}, we have $u$ globally Lipschitz in the space variable $x$. Hence, we can linearize the problem of $u$, which turns out to be the solution of
\begin{equation*}
	\begin{cases}
		u_t-\tr(A(x)D^2 u)+V(t,x)\cdot\nabla u+\lambda u=F(t,x,\mu(t))+H(t,x,0)\,,\\
		u(0)=u_0\,,
	\end{cases}
\end{equation*}
with conditions \eqref{dirichlet} or \cn, and with
$$
V(t,x):=\int_0^1 H_p(t,x,s Du(t,x))\,ds\,.
$$
Then we can apply the Corollary of \emph{Theorem IV.9.1} of \cite{ladyzenskaja1968linear} to obtain, for all $0\le\gamma<1$,
\begin{equation*}
	\begin{split}
		\norm{u}_{\frac{1+\gamma}2,1+\gamma}\le C\Big(\norminf{F}+\norminf{H(\cdot,\cdot,0)}+\norm{u_0}_{2}\Big)\,.
	\end{split}
\end{equation*}
For the Cauchy problem \cc, we consider, for $k\in\NN$, the solution $u^k$ of the following problem
\begin{equation*}
	\begin{cases}
		u^k_t-\mathrm{tr}(A(x)D^2 u^k)+H^k(t,x,\nabla u^k)+\lambda u^k=F(t,x,\mu(t))\,,\\
		u^k(0)=u_0\,,
	\end{cases}
\end{equation*}
where
$$
H^k(t,x,p)=\frac{kH(t,x,p)}{k+|p|^2}
$$
With this choice, $H^k(\cdot,\cdot,p)$, $H_p^k(\cdot,\cdot, p)\in L^{\infty}([0,T]\times\R^n)$ and $H^k\to H$ pointwise. A straightforward application of the comparison principle (see \cite{crandall}) implies that $u^k(t,\cdot)$ is bounded in $W^{1,\infty}$ uniformly in $t$ and $k$, and so the same estimate holds for $u$. Then we can apply \emph{Theorem V.8.1} of \cite{ladyzenskaja1968linear} and obtain $u\in\mathcal C^{1+\frac\gamma2,2+\gamma}([0,T]\times\R^n)$. Finally, we observe that $u$ solves
$$
\begin{cases}
	u_t-\tr(A(x)D^2u)=G(t,x)\,,\\
	u(0)=u_0\,,
\end{cases}
$$
with $G(t,x)=F(t,x,\mu(t))-H(t,x,\nabla u)-\lambda u\in L^\infty$. Hence, from \emph{Theorem 5.1.2} of \cite{lunardi} and the hypotheses on $H$ we have, for all $0\le\gamma<1$,
$$
\norm{u}_{\frac{1+\gamma}2,1+\gamma}\le C\Big(1+\norminf{F}+\norminf{\nabla u}^2+\norm{u_0}_{2}\Big)\le C\,,
$$
where $C$ does not depend on $\mu$.

Moreover, from this estimate we get $G$ bounded in $\mathcal C^{0,\gamma}([0,T]\times\R^n)$, uniformly in $\mu$. Hence, from \emph{Theorem 5.1.9} of \cite{lunardi} we get
\begin{equation}\label{eq:stima2}
	\begin{split}
		\norm{u}_{1,2+\gamma}\le C\left(\norm{u_0}_{2+\gamma}+\norm{G}_{0,\gamma}\right)\le C\,,
	\end{split}
\end{equation}
where $C$ does not depend on $\mu$.

Then we choose $\Phi(\mu)=m$, where $m=m_\mu$ is a solution, in the distributional sense, of the FP equation
\begin{equation}\label{eq:fp}
	\begin{cases}
		m_t-\trm{m}-\mathrm{div}\big(mH_p(t,x,\nabla u)\big)=0\,,\\
		m(0)=m_0(x)\,,
	\end{cases}
\end{equation}
with conditions \conds.

For the distributional formulation, $m$ satisfies, for all $0<s<t<T$,
\begin{equation*}
	\into m(t,x)\psi(x)\,dx+\int_s^t\into m(r,x)f(r,x)\,dxdr=\into m(s,x)\phi(s,x)\,dx\,,
\end{equation*}
where $f\in L^\infty$ and $\phi\in L^\infty$ satisfies
\begin{equation}\label{eq:phi}
	\begin{cases}
		-\phi_t-\mathrm{tr}(A(x)D^2\phi)+H_p(s,x,\nabla u)\cdot\nabla\phi=f\,,\\
		\phi(t)=\psi\,,
	\end{cases}
\end{equation}
with conditions \conds. We note that the results obtained for the HJ equation can be applied also for the backward equation \eqref{eq:phi}, with the change of variable $(t,x)\to (T-t,x)$.

Taking $f=0$ and rearranging, we find
\begin{equation*}
	\begin{split}
		\into \big(m(t,x)-m(s,x)\big)\psi(x)\,dx=\into m(s,x)(\phi(s,x)-\phi(t,x))\,dx
	\end{split}
\end{equation*}

In case of conditions \ccs or \cn, we choose $Lip(\psi)=1$. As already said, for \cns we can also take $\psi\in W^{1,\infty}$ and thanks to \emph{Lemma 3.2} of \cite{MEN} we have
\begin{equation}\label{eq:holdert}
	|\phi(t,x)-\phi(s,x)|\le C|t-s|^{\frac12}\,,
\end{equation}
which implies, for a $C$ not depending on $\mu$,
\begin{equation}\label{uff}
	\du(m(t),m(s))\le C|t-s|^{\frac12}\,.
\end{equation}
We argue in a similar way for the case \cc. Here, we have $\nabla\phi$ uniformly bounded in $L^\infty$. Hence, $\phi$ solves
\begin{equation*}
	\begin{cases}
		-\phi_t-\mathrm{tr}(A(x)D^2\phi)=f-H_p(s,x,\nabla u)\cdot\nabla\phi\in L^\infty\,,\\
		\phi(t)=\psi\,,
	\end{cases}
\end{equation*}
and applying again \emph{Theorem 5.1.2} of \cite{lunardi}, \eqref{eq:holdert} remains true, which implies once again \eqref{uff}.

As regards Dirichlet boundary conditions \cd, we take $\psi\in \mathcal C^1(\Omega)$ with $\dbc{\psi}$ and $\norm{\psi}_1\le 1$. From \emph{Lemma 4} of \cite{MED} we get
$$
\norm{\phi}_{\frac12,1}\le C\norm{\psi}_1\implies d_V(m(t),m(s))\le C|t-s|^{\frac12}.
$$
Hence, if we choose $M$ sufficiently large, we have in all cases $m\in X$, and $\Phi:X\to X$ is well-defined.

To apply Schauder's theorem, we have to check that $\Phi(X)$ is relatively compact in $\mathcal C([0,T];V)$ and continuous. For the relatively compactness, let $\{\mu^k\}_k\subset X$, and let $u^k$ and $m^k$ be the solutions of the HJ and the FP equation related to $\mu^k$.

As already seen, we have $\|u^k\|\amu\le C$, where $C$ does not depend on $k$. Hence, up to subsequences, we can apply Ascoli-Arzelà's Theorem and, for a certain $u\in\mathcal C^{\frac{1+\gamma}2,2+\gamma}$,
$$
u^k\to u\qquad\mbox{in }\mathcal C^{0,1}([0,T]\times\Omega)\,,
$$
at least for boundary conditions \cds or \cn. In those cases, take $\phi^k$ and $\phi$ the solution of \eqref{eq:phi} related to $u^k$ and $u$, with $\psi\in W^{1,\infty}$, satisfying $\dbc{\psi}$ in case of \cd. Then, arguing as before, $\phi^k$ is uniformly bounded in $\mathcal C^{\frac{1+\gamma}2,1+\gamma}$ and, up to subsequences, $\phi^k\to\phi$ in $\mathcal C([0,T]\times\Omega)$. Then, subtracting the distributional formulations of $m^k$ and $m$ with test functions $\phi^k$ and $\phi$ we get
$$
\into \psi(x)(m^k(t,x)-m(t,x))\,dx=\into (\phi^k(0,x)-\phi(0,x))\,m_0(dx)\to0\,,
$$
which implies $m^k\to m$ in $\mathcal C([0,T];V)$.

In case of boundary conditions \cc, the convergence of $u^k$ towards $u$ is true only in $\mathcal C^{0,2}_{loc}([0,T]\times\Omega)$, thanks to \eqref{eq:stima2}, with $u\in\mathcal C^{1,2+\gamma}([0,T]\times\Omega)$. In this case we note that $m^k$ is the density of the following process
\begin{equation*}
	\begin{cases}
		dX_t^k=b^k(t,X_t^k)\,dt+\sqrt 2\sigma(X_t^k)\,dB_t\,,\\
		X_0^k=X_0\,,
	\end{cases}
\end{equation*}
where $X_0$ is any fixed process with $\mathscr L(X_0)=m_0$, and
$$
b^k(t,x)=-H_p(t,x,Du^k(t,x))
\,,\qquad \sigma(x)\sigma^\mathsf{T}(x)=A(x)\,.
$$
In the same way we define the drift $b$ and the process $X_t$ related to $m$, solution of the FP equation \eqref{eq:fp}. Then, we have
\begin{multline*}
		\EE\left[|X_t^k-X_t|^2\right]\le 2\,\EE\left[\int_0^t \left|b^k(s,X_s^k)-b^k(s,X_s)\right|^2\,ds\right]\\
		+4\,\EE\left[\left|\int_0^t\big(\sigma(X_s^k)-\sigma(X_s)\big)\,dB_s\right|^2\right]\,.
\end{multline*}
The second integral is bounded using the It\^{o} isometry and the Lipschitz bound on $\sigma$:
$$
\EE\left[\left|\int_0^t\big(\sigma(X_s^k)-\sigma(X_s)\big)\,dB_s\right|^2\right]\le C\,\EE\left[\int_0^t|X_s^k-X_s|^2\,ds\right]\,.
$$
For the first term we get
\begin{equation*}
	\begin{split}
		&\EE\left[\int_0^t \left|b^k(s,X_s^k)-b^k(s,X_s)\right|^2\,ds\right]\le C\,\EE\left[\int_0^t|X_s^k-X_s|^2\,ds\right]\\
		+C\,&\EE\left[\int_0^t \big|H_p(s,X^k_s,\nabla u^k(s,X_s^k))-H_p(s,X_s,\nabla u(s,X_s))\big|^2\,ds\right]\,.
	\end{split}
\end{equation*}
For the last integral we use the following strategy:
\begin{equation*}
	\begin{split}
		&\EE\left[\int_0^t \big|H_p(s,X^k_s,\nabla u^k(s,X_s^k))-H_p(s,X_s,\nabla u(s,X_s))\big|^2\,ds\right]\\
		\le 2\,&\EE\left[\int_0^t \big|H_p(s,X^k_s,\nabla u^k(s,X_s^k))-H_p(s,X^k_s,\nabla u(s,X^k_s))\big|^2\,ds\right]\\
		+2\,&\EE\left[\int_0^t \big|H_p(s,X^k_s,\nabla u(s,X_s^k))-H_p(s,X_s,\nabla u(s,X_s))\big|^2\,ds\right]\\
		\le C\,&\int_0^t\int_{\R^n}|\nabla(u^k-u)|^2\,m^k(s,x)\,dxds+C\,\EE\left[\int_0^t|X_s^k-X_s|^2\,ds\right]\,.\\
	\end{split}
\end{equation*}
This means
\begin{equation}\label{eq:pregronwall}
	\begin{split}
		\EE\left[|X_t^k-X_t|^2\right]\le C\int_0^t\int_{\R^n}|\nabla(u^k-u)|^2\,m^k(s,x)\,dxds+C\,\EE\left[\int_0^t|X_s^k-X_s|^2\,ds\right]\,.
	\end{split}
\end{equation}
The space-time integral is handled in the following way: for a compact $E\subset\R^n$, we have
	\begin{multline*}
		\int_0^t\int_{\R^n}|\nabla(u^k-u)|^2\,m^k\,dxds=\int_0^t\int_{E}|\nabla(u^k-u)|^2\,m^k\,dxds\\+\int_0^t\int_{E^c}|\nabla(u^k-u)|^2\,m^k\,dxds
		\le\norm{u^k-u}^2_{\mathcal C^{0,1}([0,T]\times E)}+C\norm{m^k}_{L^1([0,T]\times E^c)}\,.
		\end{multline*}
The first term goes to $0$ for each compact $E\subset\R^n$. For the second term, we note that $m^k$ has finite first order moments, uniformly bounded in $k$. Actually, if we take the solution $\phi$ of \eqref{eq:phi} with $f=0$ and $\psi=|x|$, from standard regularity results (see \cite{Priola}) we have
$$
\norminf{\phi}\le C(1+|x|)\,.
$$
Using hypothesis \emph{(H3)}, we obtain in the weak formulation of $m^k$
$$
\int_{\R^d}|x| m^k(t,x)\,dx=\int_{\R^d}\phi(0,x)\,m_0(dx)\le C\left(1+\int_{\R^d}|x|m_0(dx)\right)\le C\,.
$$
This implies that $\norm{m^k}_{L^1([0,T]\times E^c)}$ can be chosen arbitrarily small, if we take $E$ sufficiently large. This implies that
$$
\int_0^t\int_{\R^n}|\nabla (u^k-u)|^2\,m^k\,dxds=\omega(k)\,,\qquad\mbox{where }\lim\limits_{k\to+\infty}\omega(k)=0\,.
$$
Coming back to \eqref{eq:pregronwall} and using Gronwall's inequality, we finally obtain
	\begin{multline*}
\EE\left[|X_t^k-X_t|^2\right]\le C\,\EE\left[\int_0^t|X_s^k-X_s|^2\,ds\right]+\omega(k)\implies\\
\EE\left[|X_t^k-X_t|^2\right]\le e^{Ct}\omega(k)\to0\,.
	\end{multline*}
Since
$$
\du(m^k(t),m(t))=\sup\limits_{Lip(\phi)= 1}\EE\left[\phi(X_t^k)-\phi(X_t)\right]\le C\sqrt{\EE[|X_t^k-X_t|^2]}\,,
$$
we have proved that $m^k\to m$ in $\mathcal C([0,T];V)$ (up to subsequences) for all the boundary conditions \conds.

The continuity argument can be proven in the same way of the compactness. This concludes the existence result.

Observe that, since $u$ is smooth, we can split the divergence term in \eqref{eq:fp}, obtaining for $m$ the equation
$$
\begin{cases}
	m_t-\trm{m}-\mathrm{div}(H_p(t,x,\nabla u))m-H_p(t,x,\nabla u)\nabla m\,,\\
	m(0)=m_0\,,
\end{cases}
$$
with conditions \eqref{cauchy}, \eqref{dirichlet} or \eqref{neumann}. This implies, arguing as in the HJ equation, $m\in\mathcal C^{1+\frac\gamma2,2+\gamma}$.\\

For the uniqueness, let $(u_1,m_1)$ and $(u_2,m_2)$ be two solutions of \eqref{MFG}, and set $(z,\mu):=(u_1-u_2,m_1-m_2)$. Then, called
$$
V(t,x):=\int_0^1 H_p\big(t,x,s\nabla u_1(t,x)+(1-s)\nabla u_2(t,x)\big)\,ds\,,
$$
the couple $(z,\mu)$ solves the following system:
\begin{equation*}
	\begin{cases}
		z_t-\tr(A(x)D^2 z)+V(t,x)\cdot\nabla z+\lambda z=F(t,x,m_1(t))-F(t,x,m_2(t))\,,\\
		\mu_t-\trm{\mu}-\mathrm{div}(\mu H_p(t,x,\nabla u_1))=\mathrm{div}\big(m_2(H_p(t,x,\nabla u_1)-H_p(t,x,\nabla u_2))\big)\,,\\
		z(0)=0\,,\qquad \mu(0)=0\,,
	\end{cases}
\end{equation*}
coupled, in the case of Dirichlet boundary conditions, with conditions \eqref{dirichlet} for $(z,\mu)$. In the case of Neumann boundary conditions, we have for the couple $(u,m)$ and for $x\in\partial\Omega$ the conditions
$$
A(x)\nabla z(t,x)\cdot \nu(x)=0
$$
$$
\Big[\big(\sum\limits_j\partial_j(A_{ij}(x)\mu)\big)_i+\mu H_p(t,x,\nabla u_1)+m_2\big(H_p(t,x,\nabla u_2)-H_p(t,x,\nabla u_1)\big)\Big]\cdot\nu_{|\partial\Omega}=0\,.
$$
We start with the two cases \eqref{dirichlet} and \eqref{neumann}. Multiplying by $\mu$ the Fokker-Planck equation of $\mu$, we obtain
\begin{equation*}
	\begin{split}
		\frac12\into \mu(t)^2\,dx&+\frac{\mu_1}2\int_0^t\into|\nabla\mu|^2\,dx ds+\inti \mu H_p(t,x,\nabla u_1)\nabla\mu\,dx ds\\
		&\le\inti m_2\big(H_p(t,x,\nabla u_2)-H_p(t,x,\nabla u_1)\big)\nabla\mu\,dx ds.
	\end{split}
\end{equation*}
Since $\norminf{\nabla u_1}\le C$, we get from Young's inequality
$$
\inti \mu H_p(t,x,\nabla u_1)\nabla\mu\,dx ds\ge-\frac{\mu_1} 4\inti|\nabla\mu|^2\,dx ds-C\inti \mu^2\,dx ds\,.
$$
In the same way, since $\norminf{m_2}\le C$ and $H_p$ is Lispchitz in the last variable, we get
\begin{multline*}
\inti m_2\big(H_p(t,x,\nabla u_2)-H_p(t,x,\nabla u_1)\big)\nabla\mu\,dx ds\\
\le\frac{\mu_1} 8\inti|\nabla\mu|^2\,dx ds+C\inti|\nabla z|^2\,m_2\,dx ds.
\end{multline*}
This means
$$
\frac12\into\mu(t)^2\,dx+\frac{\mu_1}8\inti|\nabla\mu|^2\,dx ds\le C\inti\mu^2\,dx ds+C\int_0^t\norm{\nabla z}^2_{L^\infty([0,s]\times\Omega)}\, ds
$$
which implies from Gronwall's Lemma
\begin{equation}\label{est_mu}
	\begin{split}
		\sup\limits_{s\in[0,t]}\into\mu(s)^2\,dx+\inti|\nabla\mu|^2\,dx ds\le C\int_0^t\norm{\nabla z}^2_{L^\infty([0,s]\times\Omega)}\, ds\,.
	\end{split}
\end{equation}
To estimate the last integral, we study the equation of $z$. Since $V\in L^\infty$ and, for all $s\in[0,T]$,
$$
\norm{F(t,x,m_1(t))-F(t,x,m_2(t))}_{L^\infty([0,s]\times\Omega)}\le C\sup\limits_{r\in[0,s]}\mathbf{d_1}\big(m_1(r),m_2(r)\big)\,,
$$
we have from the Corollary of \emph{Theorem IV.9.1} of \cite{ladyzenskaja1968linear},
\begin{equation}\label{corlsu}
	\begin{split}
		\norm{\nabla z}^2_{L^\infty([0,s]\times\Omega)}\le C\left[\sup\limits_{r\in[0,s]}\mathbf{d_1}\big(m_1(r),m_2(r)\big)\right]^2\,.
	\end{split}
\end{equation}
Since in case of \eqref{dirichlet} and \eqref{neumann} the domain $\Omega$ is bounded and the test functions for $\mathbf{d_1}$ are bounded, we have
\begin{multline*}
\left[\sup_{r\in[0,s]}\mathbf{d_1}\big(m_1(r),m_2(r)\big)\right]^2\le\left[\sup\limits_{r\in[0,s]}\sup\limits_{\substack{Lip(\phi)\le 1\\\norminf{\phi}\le C}}\into\phi(x)\,\mu(r)\,dx\right]^2\\
\le C\sup_{r\in[0,s]}\into\mu(r)^2\,dx\,.
     \end{multline*}
Coming back to \eqref{est_mu}, we have
$$
\sup\limits_{s\in[0,t]}\into\mu(s)^2\,dx+\inti|\nabla\mu|^2\,dx ds\le C\int_0^t\sup_{r\in[0,s]}\into\mu(r)^2\,dx\,ds\,,
$$
which implies, thanks to Gronwall's Lemma, $\mu\equiv0$, and so $z\equiv0$.

For the Cauchy problem \eqref{cauchy}, we consider two processes $(X_t^1)_t$ and $(X_t^2)_t$ with $\mathscr L(X_t^1)=m_1(t)$, $\mathscr L(X_t^2)=m_2(t)$ and 
 $X_0^1=X_0^2$. Arguing as in \eqref{eq:pregronwall}, we have
$$
\EE\left[|X_t^1-X_t^2|^2\right]\le C\int_0^t\int_{\R^n}|\nabla(u_1-u_2)|^2\,m_1(s,x)\,dxds+C\,\EE\left[\int_0^t|X_s^1-X_s^2|^2\,ds\right]\,.
$$
Applying Gronwall's Lemma, we have
\begin{multline*}
\sup\limits_{s\in[0,t]}\mathbf{d_1}\big(m_1(s),m_2(s)\big)^2\le \EE[|X_t^1-X_t^2|^2]\le C\int_0^t\int_{\R^n}|\nabla z|^2\,m_1\,dxds\\
\le C\int_0^t \norm{\nabla z}^2_{L^\infty([0,s]\times\Omega)}\, ds\,.
\end{multline*}
Using once again \eqref{corlsu}, we get
$$
\sup\limits_{s\in[0,t]}\mathbf{d_1}\big(m_1(s),m_2(s)\big)^2\le C\int_0^t\sup\limits_{r\in[0,s]}\mathbf{d_1}\big(m_1(r),m_2(r)\big)^2\,ds\,,
$$
which implies $\mu\equiv0$, and so $z\equiv0$. This concludes the proof.

\end{proof}

\section{Two Forward-Forward models}\label{s3}

In this section we introduce two models which have the structure of a FF-MFG. The framework is appropriate when we want to model a continuity of agents which are dynamically retrieving information from the past environmental scenarios. We focus on an opinion formation model set in a 1D space and a 2D voting model which share the same formal structure.

\subsection{A 1D opinion formation model}

In recent years, the process of opinion formation, along with other sociological and economic phenomena, has garnered considerable attention from physicists and mathematicians. It has been observed that certain tools employed in statistical mechanics can be effectively applied to model these phenomena. Notable works in this area include \cite{galam2005heterogeneous, sznajd2000opinion, deffuant2002can} and others. Within this context, two active fields have swiftly emerged, commonly referred to as sociophysics or, within the mathematical community, behavioral social systems. These fields are dedicated to describing these phenomena from the perspective of physicists. For an overview and up-to-date references, we recommend the monographs \cite{chakrabarti2007econophysics, galam2012sociophysics, sen2014sociophysics, BBP17}. Additionally, for works with a more mathematical focus, consider \cite{BCM14}, where the authors discuss the use of partial differential equations in modeling for the social sciences, and \cite{FPTT17}, where the authors employ Fokker-Planck equations to model socio-economic phenomena.

\medskip

In the work \cite{stella2013opinion}, the authors propose an opinion dynamic model that describes the evolution of an opinion in a population consisting of two groups: pliant and stubborn agents, where the latter act as leaders. The model adopts the structure of a multi-population backward-forward Mean Field Game.
Drawing inspiration from \cite{stella2013opinion}, we adopt standard notation and concepts, considering a population of homogeneous agents. Each agent is characterized by the opinion $X(t)\in \Omega= \mathbb{R}$ at time $t\in [0,T]$, where $[0,T]$ represents the time horizon window. We also consider the presence of some advertised opinions, as clarified further below. Notably, in the case of a Forward-Forward Mean Field Game (FF-MFG), there is no modeling or theoretical necessity to bound $T$, unless for the numerical tools that are required for approximating the solution.

\medskip

We assume that the opinion $X(t)$ is a solution of \eqref{dynamicsX}, where the dynamics are driven by the control variable $\alpha$. The presence of the stochastic term in \eqref{dynamicsX} is motivated by the effect of uncertainty in the opinion evolution. We consider a probability density function $m :\Omega\times[0,T]\rightarrow \mathbb{R}$ representing the percentage of agents in state $x$ at time $t$, satisfying 
$$ \int_\Omega m(t,x)dx=1, \quad \text{for any } t\in[0,T].$$

The fundamental principle underlying our model is that each agent adjusts his opinion based on the average opinion of their surroundings. Specifically, an agent is more sensitive to and attracted by opinions with high density that are close to their own, while they give less consideration to opinions that are too distant. This reflects a typical crowd-seeking behavior, previously studied in its non-local variation in \cite{stella2013opinion} and justified by a kinetic framework in \cite{toscani2006kinetic}. We consider a fixed probability distribution function $g(x)$ (for example, $g$ could be the density of a normally distributed random variable). For any fixed $t\in[0,T]$, we define the local average opinion as 
$$ \bar m(t,x):= \left(y m(t,\cdot)\ast g(\cdot)\right)(t,x)=C(t,x)\int_{\Omega}y m(t,y)g(y-x)dy, $$
where $C(t,x)$ is a normalization variable, i.e., $C(t,x) = 1/\int_{\Omega} m(t,y)g(y-x)dy$.

We model the running cost terms in \eqref{representacionestocastica} to penalize the distance from the average opinion of the surroundings, determined by the shape of the distribution $g$. Thus, 
\begin{equation}\label{Fopi}
F(x,m)=a_1\left|\bar m(t,x)-x\right|. 
\end{equation}

Simultaneously, we assume the existence of a collection of points $\{\bar{x}_i\}_{i=1,...,N}$ that represent special opinions where advertising is spent. These points serve as special points of attraction, and their strength depends on the parameters $k_i$. Therefore, 
\begin{equation}\label{ellopi}
\ell(x,\alpha)=\frac{1}{2}\left[a_2\alpha^2+a_3\min_{i=1,...,N}k_i\left(\bar x_i-x\right)^2\right].
\end{equation}

It's worth noting that we also penalize the control, implying that the agents are stubborn and prefer to change their opinion as little as possible. The parameters $a_1$, $a_2$, $a_3 \geq 0$ balance the influence of these various principles guiding the evolution of the agents' choices.

We consider an initial distribution of opinions $m_0(x)$ and an initial cost $u_0(x)$. It is possible to assume either that $u_0(x)\equiv 0$ if the agents start the game without any prior information about the system or 
$$ u_0(x) = \min_{i=1,...,N}k_i\left(\left(\bar x_i-x\right)^2\right)$$
if we assume that there have already been some advertisements prior to the start of the game, introducing the presence of some polarizing opinions.

The final FF-MFG model that we obtain is, for $(t,x)\in [0,T]\times \R$
\begin{equation}
\label{ffmfg-opi}
\left\{
\begin{array}{ll} 
u_t(t,x)+\max\limits_{\alpha\in\R}\left\{\alpha u_x(t,x)-\ell(x,\alpha))\right\}=F(x,m)+(A(x)\, u_{x}(t,x))_x, \\
m_t(t,x)-\left( m(t,x)H_p(x,u_x(t,x))\right)_x=(A(x)\, m_{x}(t,x))_x, \\
m(0,x)= m_0(x),\quad  u(0,x)=u_0(x)
\end{array}\right.
\end{equation}

We observe that using the standard Legendre-Fenchel transform, the control depending part of the hamiltonian is
$$ H(x,p)=\max_{\alpha\in\R}\left\{\alpha p-a_2\alpha^2/2)\right\}=\frac{1}{a_2} \,\frac{p^2}{2}.$$

\begin{lemma}
Let $A>0$ and $(m_0,u_0)$ satisfying H3 in the setting \eqref{cauchy}. Then all the hypotheses H0-H3 are verified and therefore the system \eqref{ffmfg-opi} has a unique solution.
\end{lemma}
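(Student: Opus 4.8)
The plan is to verify each of the hypotheses (H0)--(H3) in the Cauchy setting \eqref{cauchy} for the specific data appearing in \eqref{ffmfg-opi}, and then invoke Theorem \ref{T1} directly. Since $n=1$ here, most quantities are scalar, which simplifies the checking considerably.

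First I would deal with (H0). The diffusion matrix is the constant $A(x)=A>0$, so $A\in\mathcal C^2(\overline{\Omega})$ trivially, one can take $\sigma=\sqrt A$ which is Lipschitz, and the uniform ellipticity $\mu_1|\xi|^2\le A|\xi|^2\le\mu_2|\xi|^2$ holds with, say, $\mu_1=A/2$ and $\mu_2=2A$. The regularity $u_0\in\mathcal C^{2+\gamma}$ and $m_0\in\mathcal P(\Omega)$ with $\mathcal C^{2+\gamma}$ density is part of what we are assuming on $(m_0,u_0)$ (the statement presupposes admissible smooth data in the Cauchy setting). Next, (H1): from the Legendre computation at the end of the excerpt, $H(x,p)=\frac{1}{2a_2}p^2$, which is smooth, convex and differentiable in $p$, has $H(x,0)=0$ globally bounded, and satisfies the quadratic-growth bounds $|H(x,p)|\le C(1+|p|^2)$, $|H_p(x,p)|=|p|/a_2\le C(1+|p|)$, $|H_{pp}|=1/a_2\le C$; there is no $(t,x)$ dependence, so $\mathcal C^{\gamma/2,\gamma}$ regularity in $(t,x)$ is immediate.

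The main work is (H2), the Lipschitz bound of $F$ in the Wasserstein distance. Here $F(t,x,m)=a_1\,|\bar m(t,x)-x|$ with $\bar m(t,x)=C(t,x)\int_\Omega y\,m(t,y)g(y-x)\,dy$ and $C(t,x)=1/\int_\Omega m(t,y)g(y-x)\,dy$. Since $|\,|a|-|b|\,|\le|a-b|$, it suffices to bound $|\bar m_1(t,x)-\bar m_2(t,x)|$ for two measures $m_1,m_2$ in terms of $\du(m_1,m_2)$. Writing $N_i(x)=\int y\,g(y-x)\,m_i(dy)$ and $D_i(x)=\int g(y-x)\,m_i(dy)$, we have $\bar m_i=N_i/D_i$, so
\begin{equation*}
\bar m_1-\bar m_2=\frac{N_1-N_2}{D_1}+N_2\,\frac{D_2-D_1}{D_1 D_2}.
\end{equation*}
Provided $g$ is smooth, bounded, with bounded derivative and linear-growth-controlled first moment (e.g. a Gaussian density), the functions $y\mapsto y\,g(y-x)$ and $y\mapsto g(y-x)$ are Lipschitz in $y$ uniformly in $x$, so $|N_1-N_2|$ and $|D_1-D_2|$ are each $\le C\,\du(m_1,m_2)$ by the dual (Kantorovich--Rubinstein) characterisation of $\du$. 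The only subtlety is the uniform lower bound $D_i(x)\ge c>0$, needed to control the denominators; this requires either restricting to a bounded opinion domain or assuming $g>0$ everywhere together with a uniform bound on the first moments of the measures in play so that mass cannot escape to infinity — I would state this as the standing assumption on $g$ and $m_0$ under which the lemma is claimed, since the set $X$ in the proof of Theorem \ref{T1} already confines the competitor measures to a Wasserstein-bounded family. Granting that, (H2) follows with $C=C(a_1,g,c)$.

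Finally (H3) in the setting \eqref{cauchy} only asks for a finite first moment, $\int_{\R}|x|\,m_0(dx)\le C$, which is exactly the hypothesis placed on $(m_0,u_0)$ in the statement. With (H0)--(H3) all verified, Theorem \ref{T1} applies and yields the unique classical solution $(u,m)\in\mathcal C^{1+\gamma/2,2+\gamma}([0,T]\times\R)^2$ of \eqref{ffmfg-opi}. The one genuine obstacle, as indicated, is ensuring the normalisation $C(t,x)$ stays bounded uniformly in $x\in\R$; everything else is a routine check against the definitions.
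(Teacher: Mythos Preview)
Your proposal is correct and follows essentially the same route as the paper's own proof: verify (H0)--(H3) for the specific data and invoke Theorem~\ref{T1}, with the only substantive step being the Lipschitz bound (H2) for $F$, obtained via the Kantorovich--Rubinstein duality applied to the integrals defining $\bar m$. Your treatment is in fact slightly more careful than the paper's, which silently inserts the factor $\max\{|C_1(t,\cdot)|,|C_2(t,\cdot)|\}$ without justifying why the normalisation stays bounded; you correctly flag this as the one genuine obstacle.
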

\begin{proof}
The hypothesis H1 is obviously verified. For the hypothesis H2, observe that for any $t\in[0,T]$
\begin{multline}
\left|F(x_1,m_1)-F(x_2,m_2)\right| \leq
a_1\left| \left|\bar  m_1(t,x_1)-x_1\right|-\left|\bar  m_2(t,x_2)-x_2\right| \right| \leq \\
a_1\left( |x_1-x_2|+\max{|C_1(t,\cdot)|,|C_2(t,\cdot)|}\left|\int_\Omega y( m_1(t,y)g(y-x_1)- m_2(t,y))g(y-x_2)dy\right|\right)\\
\leq 
a_1\left(|x_1-x_2|+\max{|C_1(t,\cdot)|,|C_2(t,\cdot)|}\max(\|g\|_\infty,L_g)\du( m_1, m_2)\right)\\
\leq K(|x_1-x_2|+\du( m_1, m_2))
\end{multline}
for a $K$ big enough. Therefore we can apply Theorem \ref{T1} and conclude the proof.
\end{proof}
It is noteworthy that the model bears closer resemblance to \cite{toscani2006kinetic} and other kinetic-based frameworks than to \cite{stella2013opinion}. This is due to our assumption that agents lack access to any information about future states or changes in the dynamics of the system. Nevertheless, the model has a steady-state formulation that adopts the typical stationary MFG form. Hence, we anticipate that after a certain time, the solutions $(u, m)$ will converge to $(\bar u, \bar  m)$, which are solutions of the system:

\begin{equation}
\label{ffmfg-opi-steady}
\begin{cases}
\rho + \max\limits_{\alpha\in\R}\left\{\alpha \bar u_x(x) - \ell(x,\alpha)\right\} = F(x,\bar  m) + (A(x)\, u_{x}(x))_x, \\
 (A(x) \,m_{x}(x) )_x+ \left( \bar u_x(x) \bar  m(x)\right)_x = 0,
\end{cases}
\end{equation}

where $\rho\in\R$ is a constant of compatibility between the two systems to be defined. Proving the long-time convergence of this class of problems remains an open issue (cf. \cite{achdou2017mean}).

\subsection{FF-Opinion formation: Tests}

In this section, we highlight some of the features of the proposed model. The primary objective of the following tests is to emphasize the distinctions from the corresponding forward-backward system and to illustrate the qualitative characteristics of the model. The simulations presented are obtained using semi-Lagrangian schemes (refer to \ref{app:num.tools} and the cited references for details).

\paragraph{Test 1. A Forward-Backward/Forward-Forward comparison. }
The objective of our first test is to compare our opinion formation model with its corresponding forward-backward counterpart. It is essential to note that in the latter case, agents forecast all future states of the system and optimize their policy based on those predictions. In contrast, in our case, each agent determines their policy based on past states of the system. The forward-backward coupling introduces additional computational challenges. Due to the nature of its evolution, we cannot employ an explicit numerical scheme for both equations simultaneously. To address this, we adopt a solution strategy similar to \cite{carlini2016DGA}, utilizing a fixed-point argument until convergence to the equilibrium of the discrete system.

\medskip 

We consider the opinion domain $\Omega=\mathbb{R}$, and the initial values of the potential and density functions are given by
$$ u_0(x)=\min_{i=1,2}k_i\left(\left(\bar x_i-x\right)^2\right)$$
with $(\bar x_1,k_1)=(0.8, 1)$ and $(\bar x_2,k_2)=(0.2, 3)$, 
$$ m_0(x)= \begin{cases} 1 & \text{if }x\in[0,1]\\ 0 & \text{otherwise} \end{cases}.$$
In other words, initially, the opinion is uniformly distributed, but agents are aware of two polarizing opinions $\{0.2,0.8\}$ for which some prior advertisement may have been spent. Specifically, the parameter of the pole $\bar x_2=0.2$ is set to $k_2=3$, while the pole $\bar x_1=0.8$ has a lower value parameter $k_1=1$. The same parameters appear in the running cost $\ell(x,\alpha)$. The strength of this factor is tuned by the weight $a_3=2$, while the penalization of the control, i.e., the resistance to change an opinion, is set to $a_2=1$. 

The distribution $g(x)$ is set to $g(x)=\frac {1}{\sqrt {0.2\pi } }e^{-{\frac {x^{2}}{0.2} } }$, modeling the influence on every agent of their surroundings. The weight of this tendency to conform to the local mean is tuned by $a_1=1$.

\begin{figure}[h]
\begin{center}
\begin{tabular}{c}
Forward-Forward Model\\
\includegraphics[width=.9\textwidth]{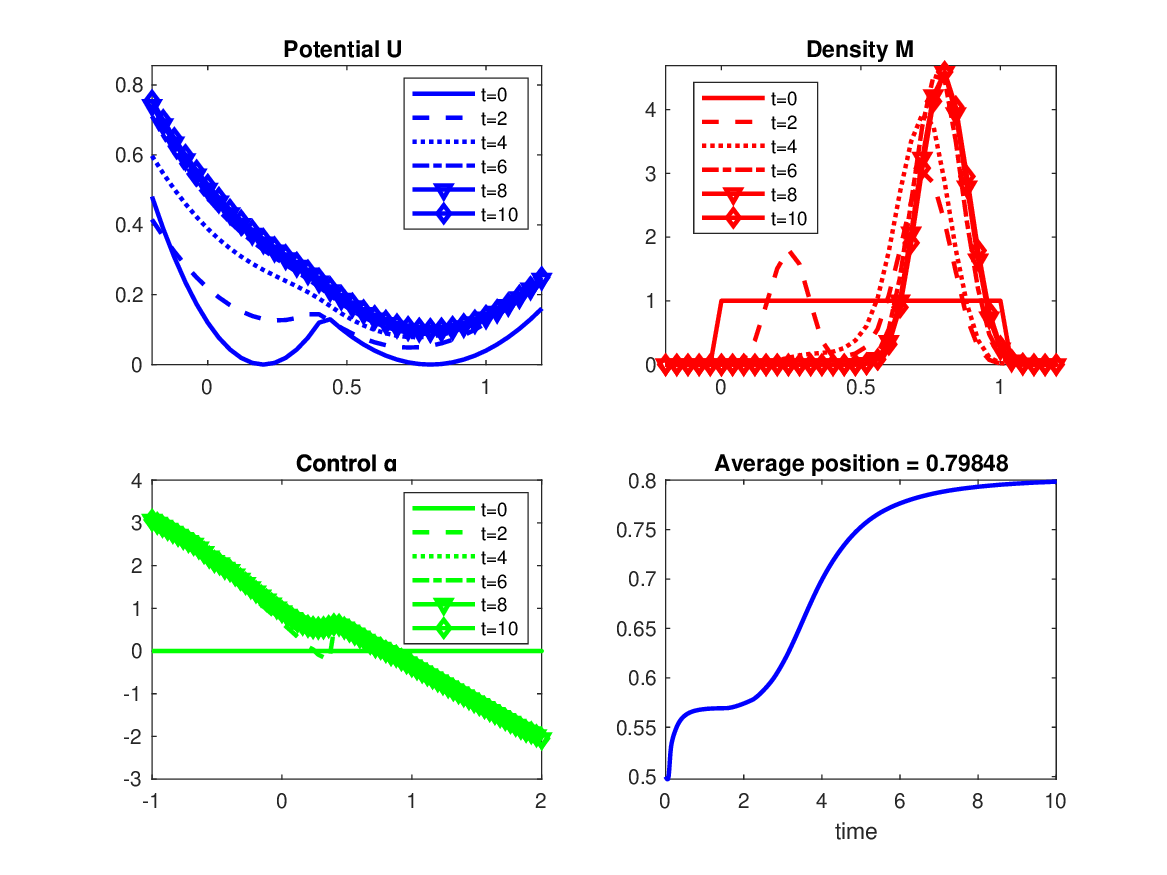}
\end{tabular}
\end{center}
\caption{Test 1. Forward-Forward model: evolution of the potential function $u$ and density distribution $ m$ (top), control map $-\nabla u/a_2$ and mean opinion $\int_{\Omega} xm(t,x)dx$. }\label{3}
\end{figure}

Finally, the diffusion is set to $A(x)\equiv 0.01$, and the discount parameter is $\lambda=0$. 

\medskip 
In Figure \ref{3}, we present the results of this initial test implemented with discretization parameters $\Delta x=0.04$ and $\Delta t=0.02$, covering approximately $\Omega\approx [-4,4]$ with Neumann boundary conditions.
We observe that the density distribution of the agents, initially constant in $[0,1]$ and null elsewhere, splits into two clusters under the influence of the initial choice of $u_0$. During the initial phase, the right-hand-side cluster progressively acquires a greater attraction capacity and tends to absorb the second cluster. Around $t=10$, this process concludes, and only one cluster is present, although it is still not centered on the advertised pole $\bar x_2$. This occurs during the final phase of the system's evolution, where the cluster slowly moves toward $\bar x_2$, becoming the center of its distribution.

Notably, there is no formation of Dirac's deltas due to the presence of a strictly positive diffusion parameter $A$. Figure \ref{3} also displays the evolution of the control map determined by the potential function $u(t,x)$ and the mean opinion of the agents, i.e., $\int_{\Omega} x m(t,x)dx$, since $\int_{\Omega} m(t,x)dx=1$. We observe the crowd reaching consensus around the pole $\bar x_2$.

\begin{figure}[h]
\begin{center}
\begin{tabular}{c}
Forward-Backward Model\\
\includegraphics[width=.9\textwidth]{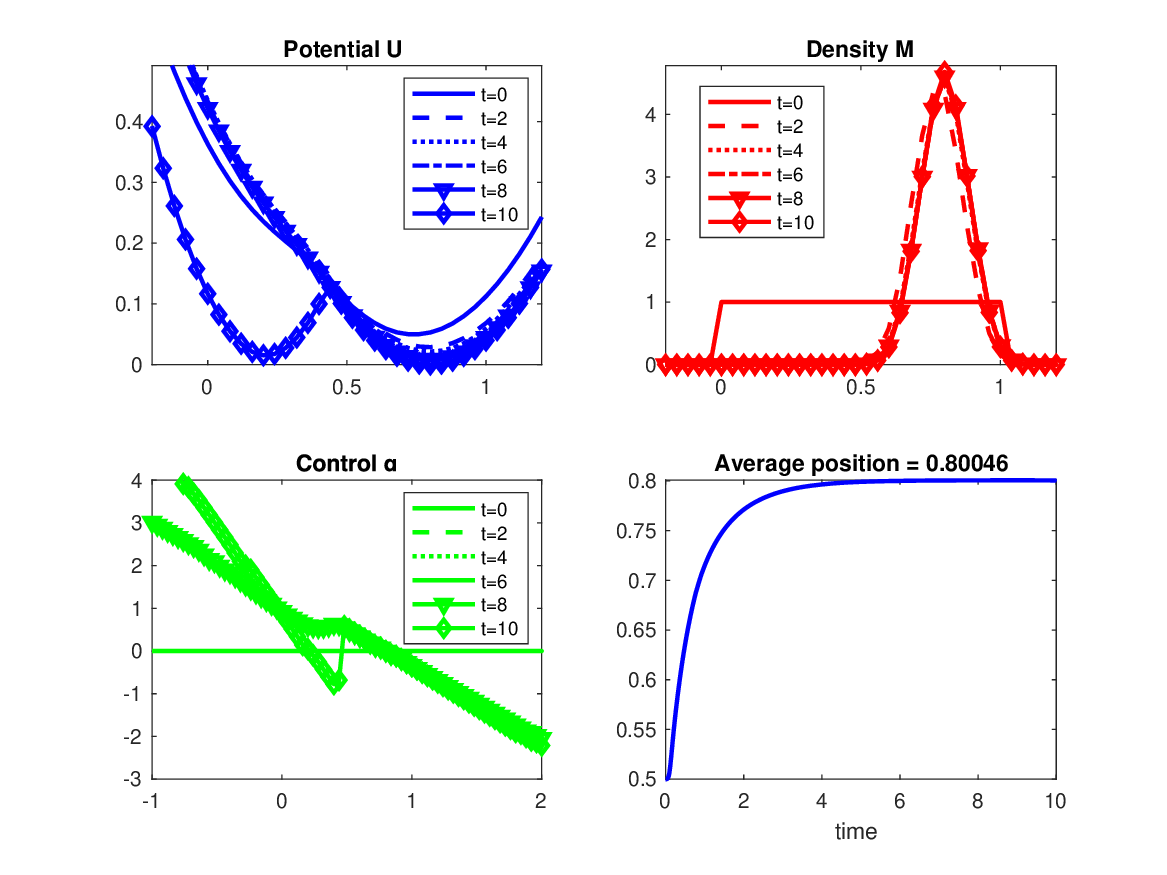}
\end{tabular}
\end{center}
\caption{Test 1. Forward-Backward model: evolution of the potential function $u$ and density distribution $ m$ (top), control map $-\nabla u/a_2$ and mean opinion $\int_{\Omega} xm(t,x)dx$.  }\label{4}
\end{figure}

\begin{figure}[ht]
\begin{center}
\begin{tabular}{cc}
\hspace{-1.2cm}
\includegraphics[width=.6 \textwidth, height=.45 \textwidth]{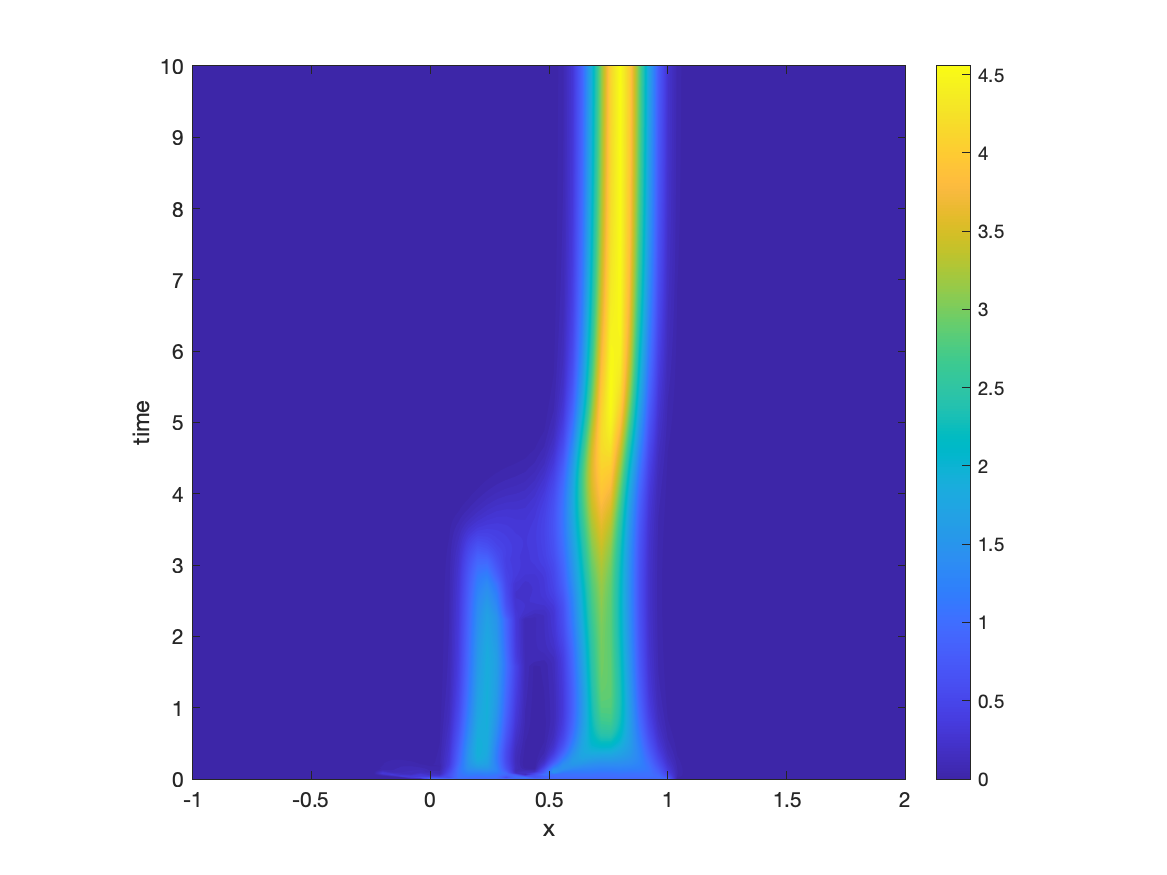}& \hspace{-1cm}
\includegraphics[width=.6\textwidth, height=.45 \textwidth]{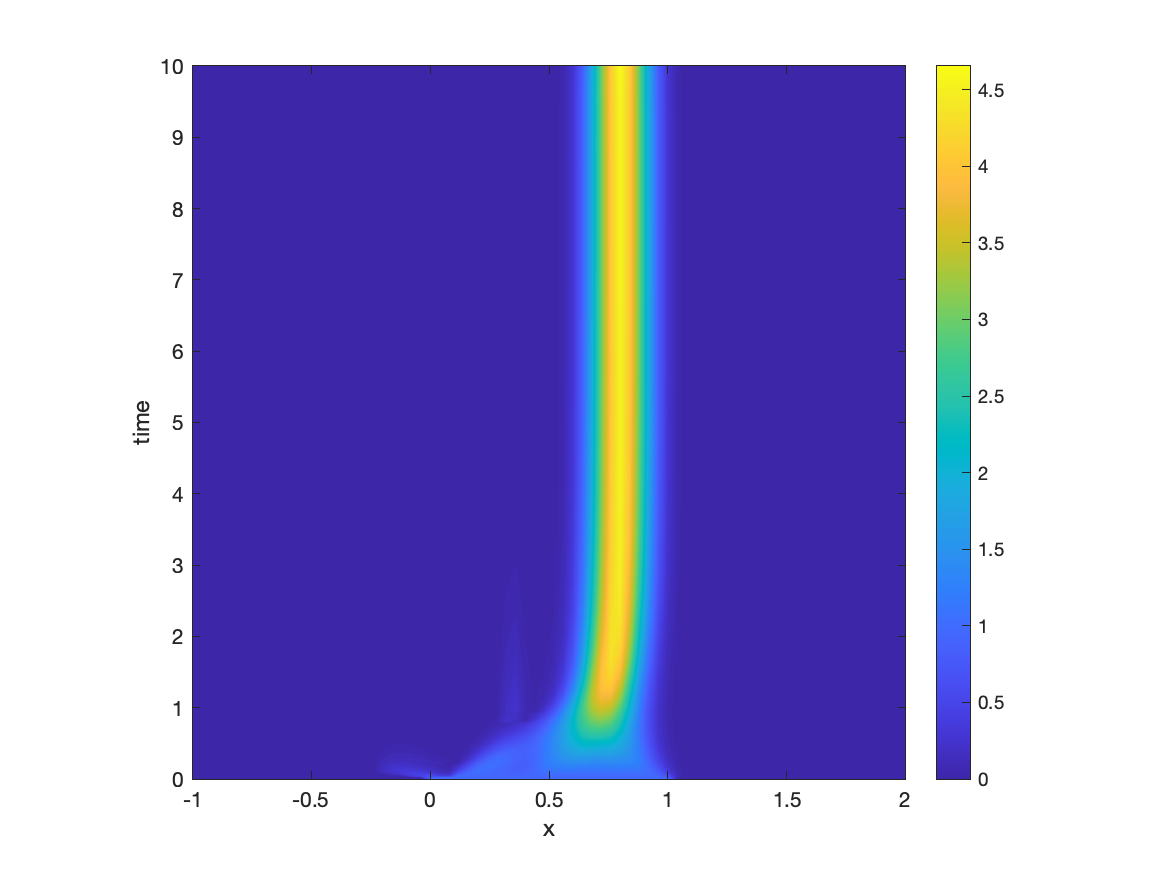} \\
Forward-Forward Model & Forward-Backward Model\\\
\end{tabular}
\end{center}
\caption{Test 1. Evolution of the density of opinion in the FF and FB model.}\label{5}
\end{figure}

\medskip
Let us compare this with the forward-backward counterpart model. We maintain all the parameters of the system unchanged, only inverting the evolution of the potential equation. The results are presented in Figure \ref{4}. We observe that the evolution of the system is markedly different. Firstly, there is no formation of two competing clusters since the agents are aware of all the future states of the system; therefore, only one attraction point emerges. Also in this case, the 'winner' of the competition is the pole $\bar x_2$, and the average opinion of the agents monotonically aligns with this final state. A more evident comparison is also reported in Figure \ref{5}, where the evolution of the density of opinions is shown side by side.

\clearpage

\paragraph{Test 2. The effect of the discount variable $\lambda$. } 

In our second test, we aim to show the effect of the discount variable $\lambda$ on the model. While it makes sense to limit the influence of the past in the optimization process (clearly, the paradigm of a player assuming that past scenarios will repeat in the future is potentially flawed), adopting a larger $\lambda$ tends to trivialize the model, transforming it into a simple relaxation around the average. This effect is illustrated in Figure \ref{6l}, where, for the same parameters as Test 1, the potential $u$ and the density distribution $m$ are shown for various choices of $\lambda$. Notably, for $\lambda\geq 7$, both $u$ and $m$ quickly move, in the early instants of the process, to a stable configuration around the poles. This dynamic, less interesting and varied than for lower values of $\lambda$, led us to restrict the remaining tests to the case $\lambda=0$. For intermediate values of lambda (e.g., $\lambda\in(0,7)$), we expect to observe essentially the same type of behavior as for $\lambda=0$, albeit mitigated and relatively smoothed down around the attraction points of the model.

\begin{figure}[h]
\begin{center}
\begin{tabular}{c}
Forward-Forward Model with various $\lambda$\\
\includegraphics[width=.7\textwidth,height=0.91 \textwidth]{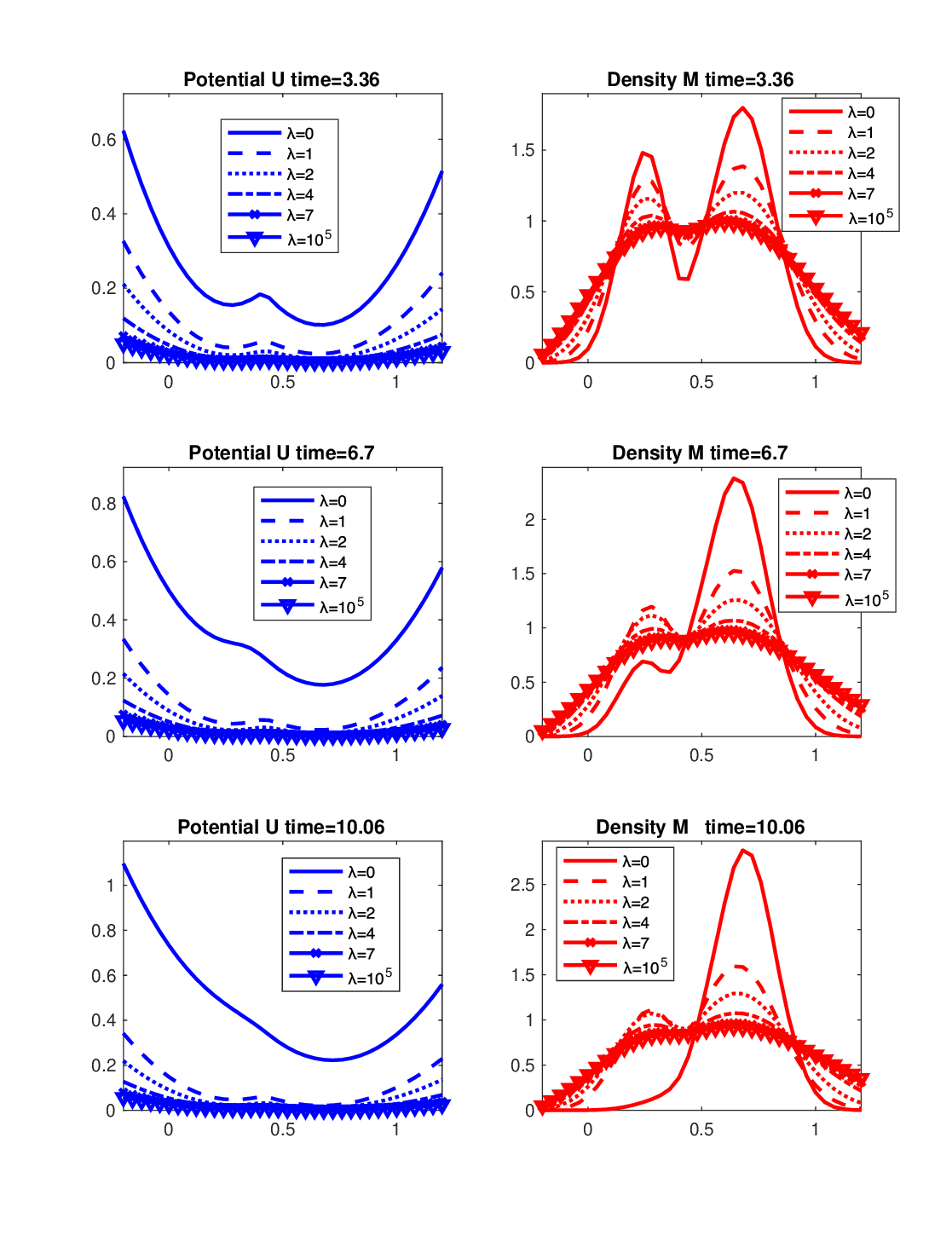}
\end{tabular}
\end{center}
\caption{Test 2. Forward-Forward model: evolution of the potential function $u$ (left) and density distribution $ m$ (top) with various values of the discount coefficient $\lambda$.  }\label{6l}
\end{figure}

\clearpage

\paragraph{Test 3. Cluster formation and control. } In our third test, we aim to illustrate two distinct features of our opinion model. In the first one, we demonstrate how our model can generate clusters over time. To achieve this, we consider the same setting as the previous test, with the only exception of the parameters $a_1=1$, $a_2=4$, and $a_3=1$. In practice, we penalize the possibility to change opinions, fostering the 'stubbornness' of our density of agents. The two attraction poles are again $0.2$ and $0.8$ with equal attraction capacities, i.e., $(\bar x_1,k_1)=(0.8, 1)$ and $(\bar x_2,k_2)=(0.2, 1)$.

\begin{figure}[h]
\begin{center}
\begin{tabular}{c}
Forward-Forward Model\\
\includegraphics[width=.9\textwidth]{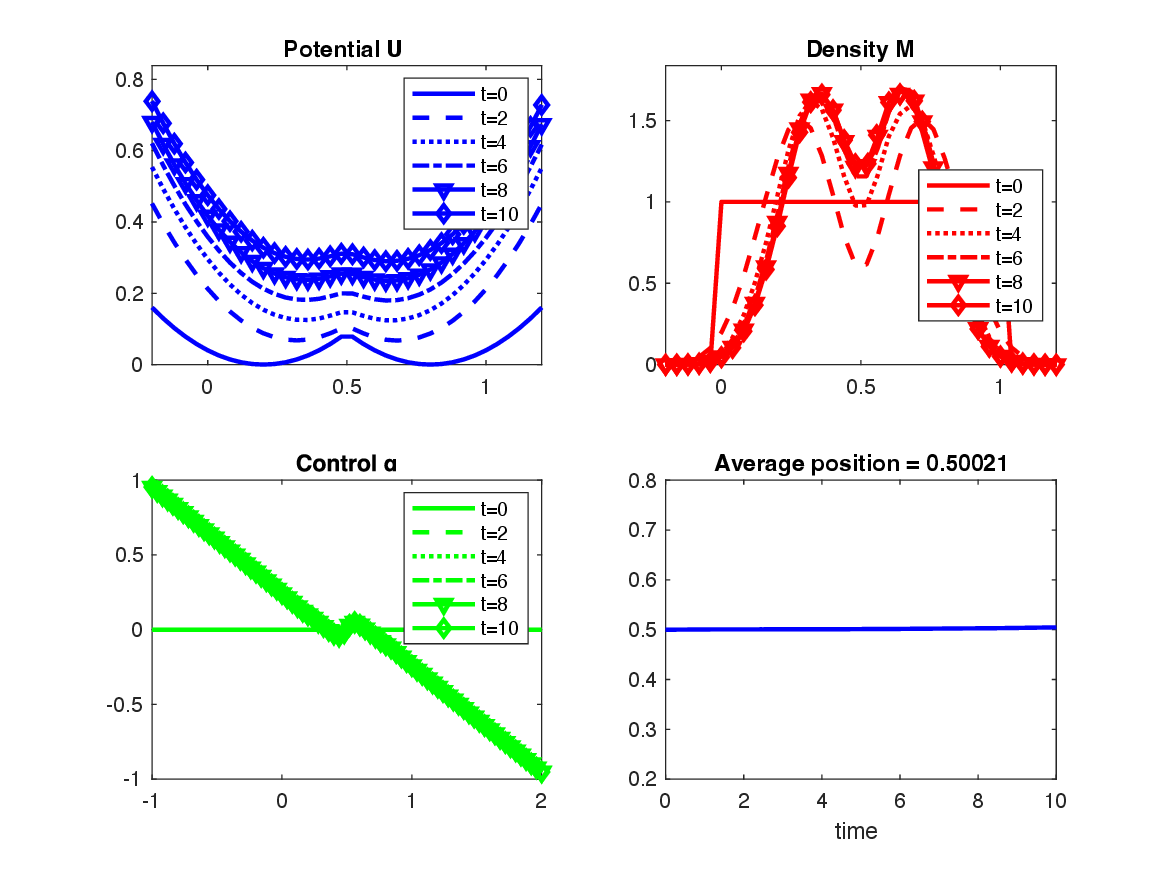}
\end{tabular}
\end{center}
\caption{Test 3. Forward-Forward model: evolution of the potential function $u$ and density distribution $ m$ (top), control map $-\nabla u/a_2$ and mean opinion $\int_{\Omega} xm(t,x)dx$. }\label{6}
\end{figure}

The test results, depicted in Figure \ref{6}, reveal a symmetric pattern around two attraction points. Initially, the agents are uniformly distributed in density, but they quickly aggregate around the two advertised poles. The problem's inherent symmetry ensures that the average opinion of all agents remains stable, with minor numerical oscillations, at the midpoint between the two clusters, denoted as $x=0.5$. Notably, in this scenario, the two clusters are interconnected, indicating that the intersection of their support is not empty. By varying the distance or making a different choice for the function $g$ (as demonstrated in subsequent tests), it's possible to achieve two or more separated clusters.

\medskip 

We delve into the question of the best advertising strategy in a relatively stable situation like the latter. Particularly, considering a limited advertising budget, is it more advantageous to be the first one to advertise at one pole, or is it better to have the ability to advertise for a longer duration? To address this question, we conduct the same test as before, setting
\begin{eqnarray*}
&\text{for } t\in[0,0.5], \quad (\bar x_1,k_1)=(0.8, 3), \quad (\bar x_2,k_2)=(0.2, 1); \\
&\text{then }t\in(0.5,10],\quad  (\bar x_1,k_1)=(0.8, 1), \quad (\bar x_2,k_2)=(0.2, 3).
\end{eqnarray*}
All the other parameters of the test are left as in the previous case.

In this scenario, the agent interested in advertising position $0.8$ acts \emph{before} and \emph{for a shorter duration}, while its competitor has a much longer advertising span but is less responsive at the beginning of the simulation. Notably, the advertisement parameters $k_1$ and $k_2$ switch their roles at $t=0.5$, representing $5\%$ of the total evolution of the system.

\begin{figure}[!h]
\begin{center}
\begin{tabular}{c}
Forward-Backward Model\\
\includegraphics[width=.9\textwidth]{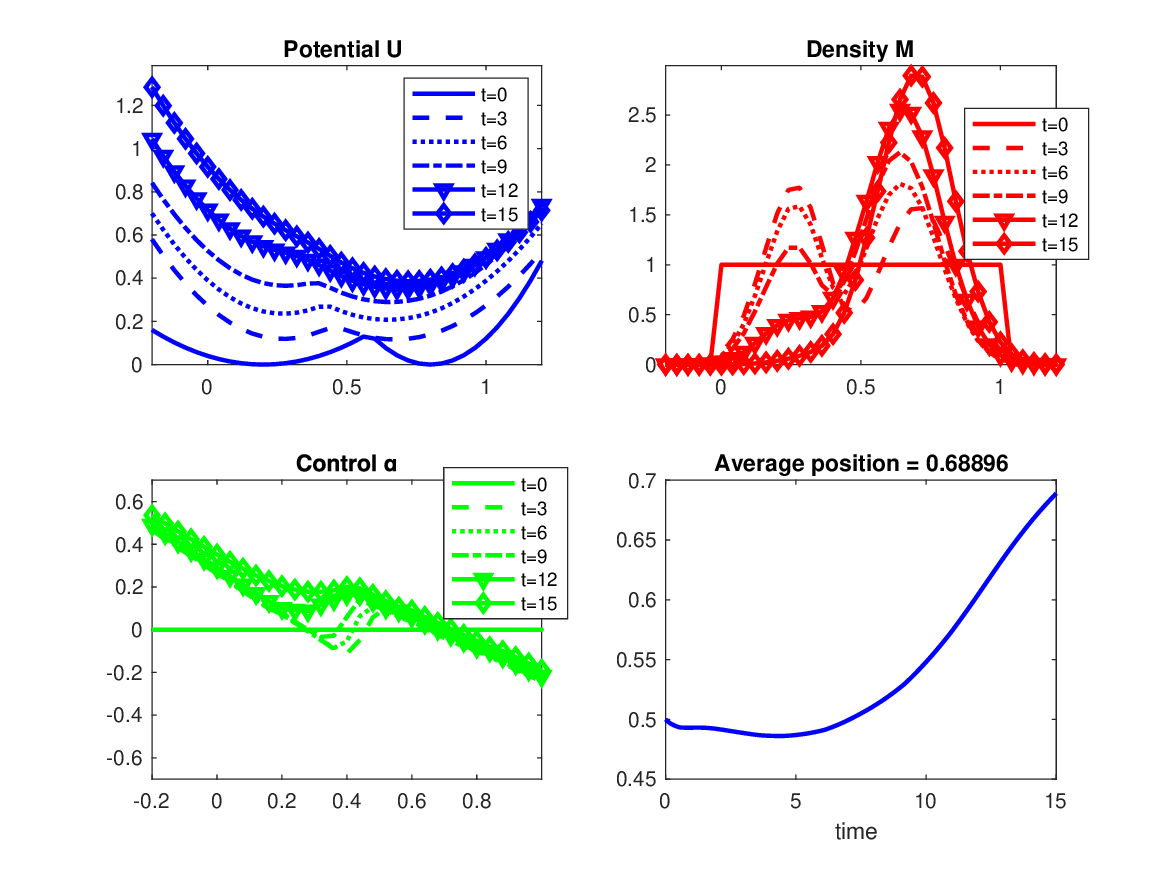}
\end{tabular}
\end{center}
\caption{Test 3. Controlled FF model: evolution of the potential function $u$ and density distribution $ m$ (top), control map $-\nabla u/a_2$ and mean opinion $\int_{\Omega} x m(t,x)dx$.}\label{7}
\end{figure}

\begin{figure}[!h]
\begin{center}
\begin{tabular}{cc}
\hspace{-1cm}
\includegraphics[width=.6 \textwidth, height=.45 \textwidth]{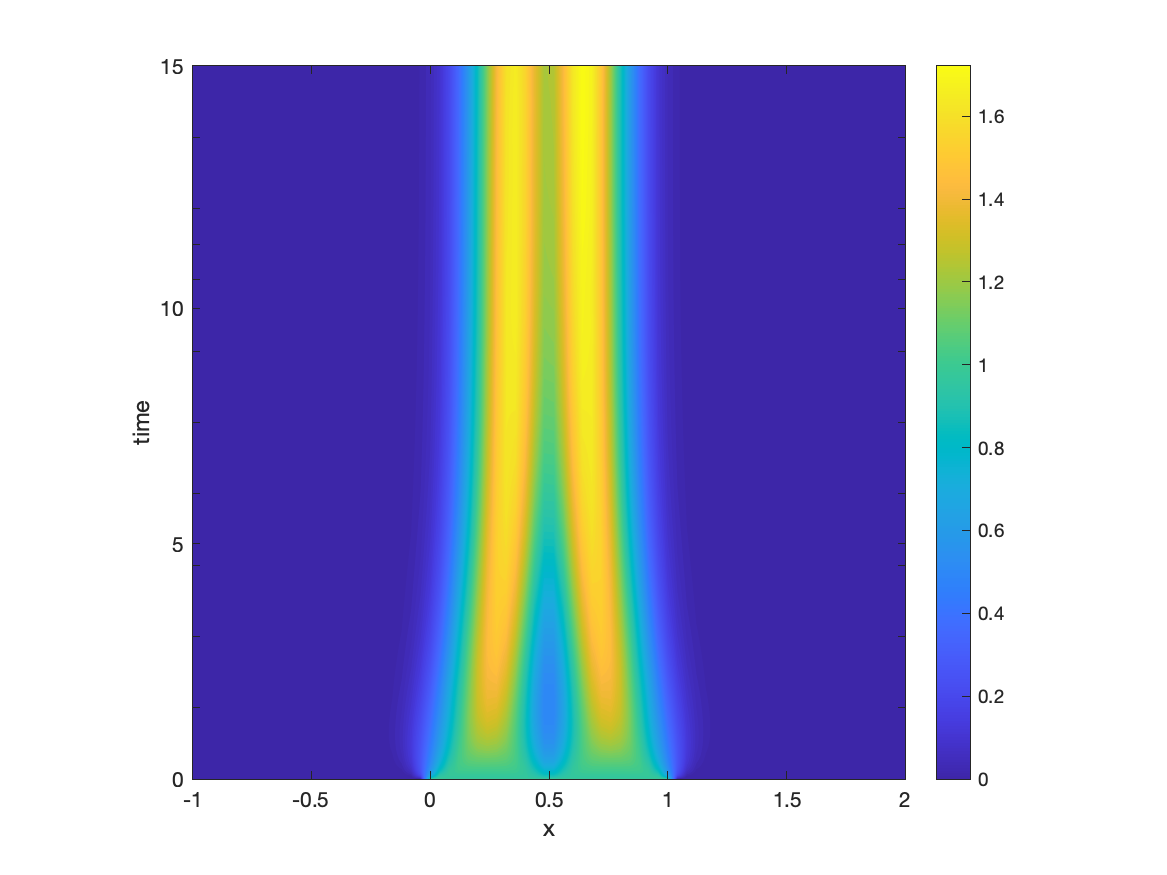}& \hspace{-1cm}
\includegraphics[width=.6\textwidth,height=.45 \textwidth]{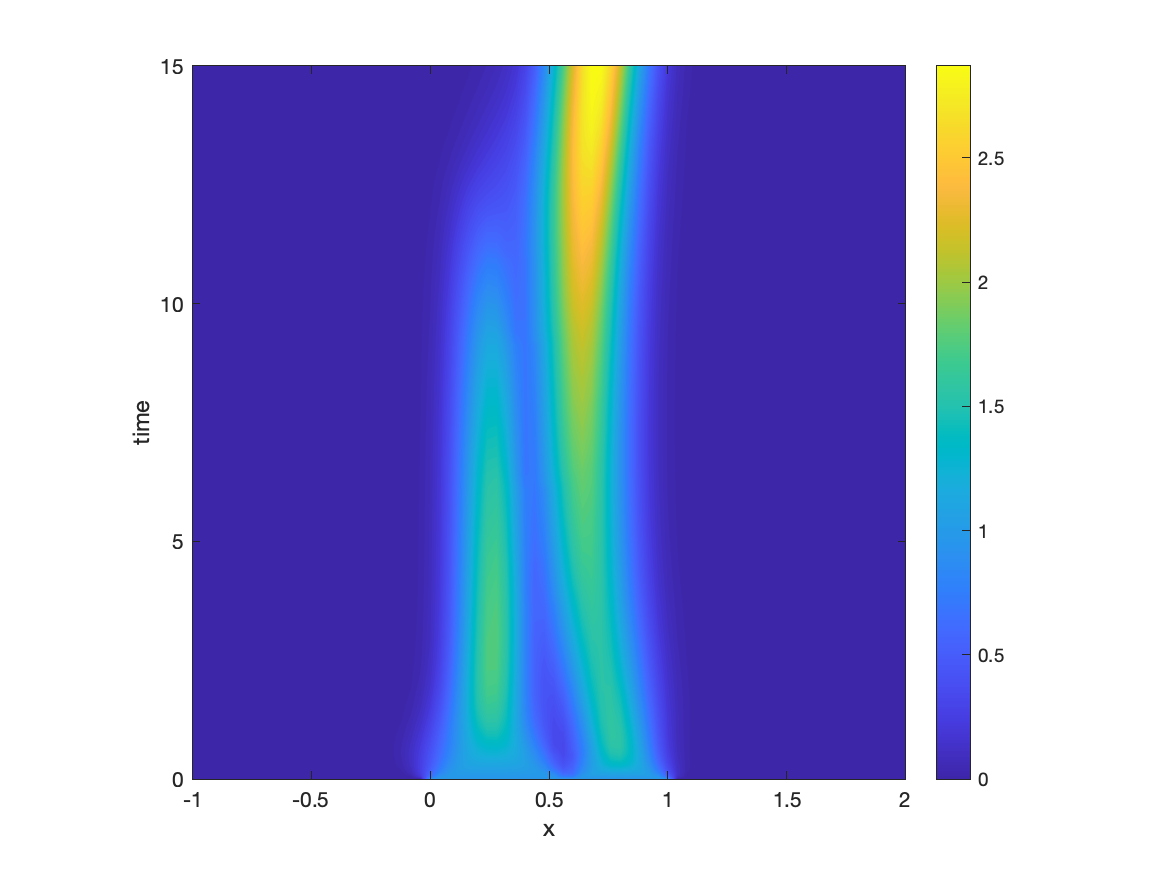} \\
Forward-Forward Model & Controlled FF Model \\\
\end{tabular}
\end{center}
\caption{Test 3. Evolution of the density of opinion in the uncontrolled and controlled model}\label{8}
\end{figure}

The results of this simulation are presented in Figures \ref{7} and \ref{8}. We observe that in the long run, the position advertised with more anticipation, $0.8$, becomes predominant, even though at $t=0.5$, when its advertising advantage ends, it is actually in a minority position if we consider the average opinion of the agents. Therefore, we can assert, at least experimentally, that in our model, advertisement in the initial evolution moments of the system is much more relevant, essentially when the first split into clusters occurs. These initial moments lead to non-trivial long-term effects of polarization, as observed in the literature \cite{lorenz2007continuous}.

\subsection{A dynamic 2D vote model}

In this section, we extend the previously introduced opinion formation model to a 2D political spectrum, often utilized to provide a more detailed ideological categorization of voters. An illustrative example of such 2D representations is the Nolan Chart \cite{bryson1968political}, where the axes represent personal freedom and economic freedom.

We assume that each voter adopts a position $X(t)\in \Omega= \R^2$ as a solution of the SDE dynamics \eqref{dynamicsX}, controlled by the variable $\alpha$. Once again, the density function $ m:[0,T]\times\Omega\rightarrow \R$ represents the percentage of voters in state $x$ at time $t$, subject to the condition 
$$ \int_{\Omega}  m(t,x)dx=1, \hbox{ for any } t\in[0,T].$$ 
Adopting the same principles as the previous 1D model, we assert that every agent adjusts their vote opinion based on an average of its surroundings. Therefore, we introduce a distribution function $g(x)$ and define, for any fixed $t\in[0,T]$, the local average as 
$$ \bar  m(t,x):= \mathbb E\left[\left(y m(t,\cdot)\ast g(\cdot)\right)(t,x)\right]=C(t,x)\int_{\Omega}y m(t,y)g(y-x)dy, $$
where $C(t,x)$ is a normalization variable, i.e., $C(t,x) = 1/\int_{\Omega}  m(t,y)g(y-x)dy$.

The distribution $g(x)$ is chosen as the isotropic multivariate normal distribution  $g(x)=g(x_1,x_2)=\frac {1}{\sqrt {\mu\pi } }e^{-{\frac {x_1^{2}+x_2^2}{\mu} } }$. More complex choices for modeling the influence of the surrounding on each voter are possible.

The other terms in \eqref{representacionestocastica} are given by
\begin{equation}
F(x,m)=a_1\left|\bar m(t,x)-x\right|.
\end{equation}
This term penalizes the distance from the average opinion of the surrounding. Assuming the existence of a collection of points $\{\bar{x}_i\}_{i=1,...,N}\in (\R^2)^n$ representing "advertised political opinions" or policy-seeking behavior of various parties with strengths $k_i$,
\begin{equation}
\ell(x,\alpha)=\frac{1}{2}\left[a_2\alpha^2+a_3\min_{i=1,...,N}k_i   \|\bar x_i-x\|^2\right].
\end{equation}
Here, $a_1$, $a_2$, and $a_3$ (all greater than or equal to 0) model \emph{conformism} (to the political environment), \emph{stubbornness}, and \emph{advertisement-influenced behavior} of each voter, respectively.

As initial scenarios, we consider a uniform initial distribution of opinions $ m_0(x)$ and a constant initial cost $v_0(x)$, i.e.,
$$  m_0(x)=\frac{1}{4}\chi_{[-1,1]^2}(x), \quad  u_0(x)=0,$$
where $\chi_B(x)$ is the indicator function of the set $B$. Setting a constant diffusion parameter $A(x)\equiv A$, we obtain the following FF-MFG model for $(t,x)\in [0,T]\times \Omega$:
\begin{equation}
\label{ffmfg-opi-2D}
\left\{
\begin{array}{ll} 
u_t(t,x)+\max\limits_{\alpha\in\R^2}\left\{\alpha\cdot \nabla u(t,x)-\ell(x,\alpha))\right\}=F[ m(t,\cdot)]+A\, \Delta u(t,x), \\
m_t(t,x)-\div\left(\nabla u(t,x) m(t,x)\right)=A\,\Delta  m(t,x), \\
m(0,x)= m_0(x),\quad  u(0,x)=u_0(x)
\end{array}\right.
\end{equation}
Also in this case, the standard hypotheses on $H(x,p)$ are verified.

\subsection{FF-dynamic vote model: Tests}

We present simulations conducted using the same schemes outlined in \ref{app:num.tools}, focusing on a 2D opinion of voting scenario with 2 candidates. The primary objective of these tests is to demonstrate the effectiveness of the proposed model in simulating complex phenomena such as cluster formation or the adoption of classic strategies to enhance the electoral performance of a candidate.

\paragraph{Test 4: Stable Cluster Formation between Two Candidates}

In this 2D setting, we showcase a fundamental cluster formation where voters polarize between two candidates. It is essential to note that our model, relying on attraction poles, naturally tends towards the formation of clusters over the long run. For this test and the following one, model parameters $a_1$, $a_2$, and $a_3$ are all set to one. The constant $\mu$, representing the size of the area related to the conformism tendency of a voter, is set to 0.2, while the diffusion parameter $A$ is set to 0.01. We assume the presence of two candidates, $C1$ and $C2$, positioned at $(0.8,0.8)$ and $(-0.8,-0.8)$, respectively. Both candidates conduct a political campaign with constant strength and efficiency, setting $k_1=k_2=1$.

\begin{figure}[h]
\begin{center}
\begin{tabular}{c}
\includegraphics[width=1 \textwidth, height=.52 \textwidth]{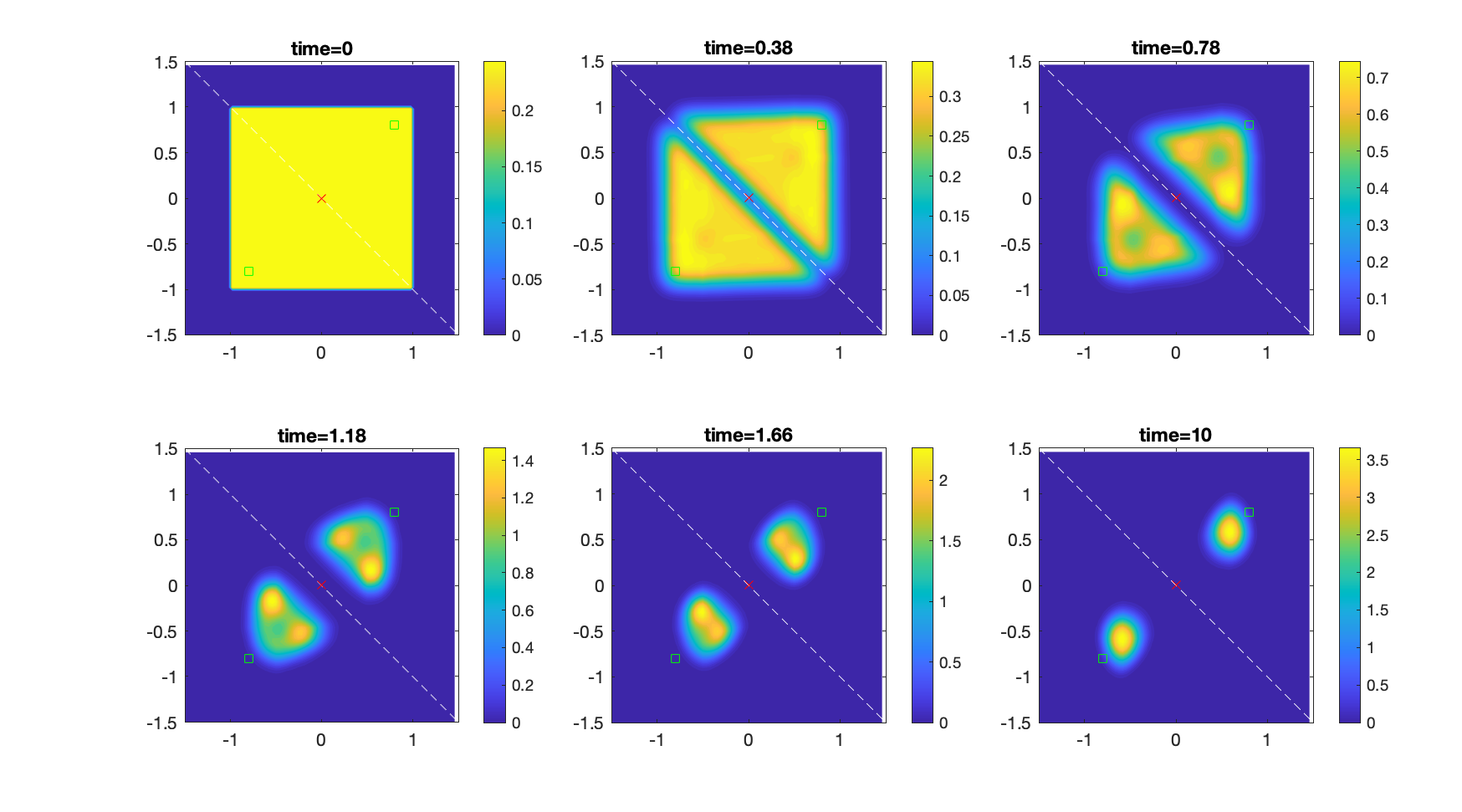}
\end{tabular}
\end{center}
\caption{Test 4. Evolution of the density of voters at $t=0, 0.38, 0.78, 1.18, 1.66, 10$. The green squares are the position of the two candidates, the red cross is the median voter position and the dotted white line is the separation between the victory region of each candidate.}\label{9}
\end{figure}

Since voters are initially evenly distributed across the entire political spectrum $[-1,1]^2$, and the potential $v_0$ is initially constant, no prior knowledge of the candidates' positions is assumed. Following the median voter theorem \cite{rowley1984relevance}, victory of one candidate over its opponent is achieved when the average of all voters' positions (i.e., the median voter) falls within the respective victory regions, which are denoted as follows:
$$ \Omega_1:= \{x=(x_1,x_2)\in 	\R^2 \;|\; x_2>-x_1\}$$
$$ \text{and } \Omega_2:= \{x=(x_1,x_2)\in 	\R^2 \;|\; x_2<-x_1\}.$$

We approximate the solution using semi-Lagrangian schemes with $\Delta x=0.04$ and $\Delta t=0.02$. The problem is solved on a grid constructed in $[-3/2,3/2]^2$ with Neumann boundary conditions. In Figure \ref{9}, the outcomes are presented. Several interesting features of the model are observed: firstly, even starting with no prior knowledge of the candidates' positions and no previous advertisements, the initial moments of system evolution ($t\in(0,1)$) significantly influence the final results. The density of voters quickly splits equally between the two candidates due to the symmetry of the data. Additionally, the conformism tendency generates multiple clusters that progressively migrate toward the advertised positions of the candidates. During this migration, multiple clusters may merge. In this scenario, we assume infinite advertisement strength from each candidate (constant in time), and as $t\gg 0$, the density distribution of voters cannot differ from one (or more) clusters at the advertised positions of the candidates. In this case, there is a perfect balance between the two candidates, and as evident from the position of the median voter (the red cross in Fig. \ref{9}), neither candidate prevails over the opponent.

\paragraph{Test 5. Overcoming the impasse via different political strategies.}

We explore the possibility of emulating typical electoral strategies in our model to overcome the impasse caused by the symmetric setting in the previous test. One potential strategy involves creating an electoral alliance to break the deadlock. How does the introduction of a third candidate $C3$, positioned at $(0.6,-0.2)$, influence the system's evolution? For this third candidate, we assume a very limited attraction strength by setting $k_3=0.2$.

\begin{figure}[!h]
\begin{center}
\begin{tabular}{c}
\includegraphics[width=1\textwidth, height=.52\textwidth]{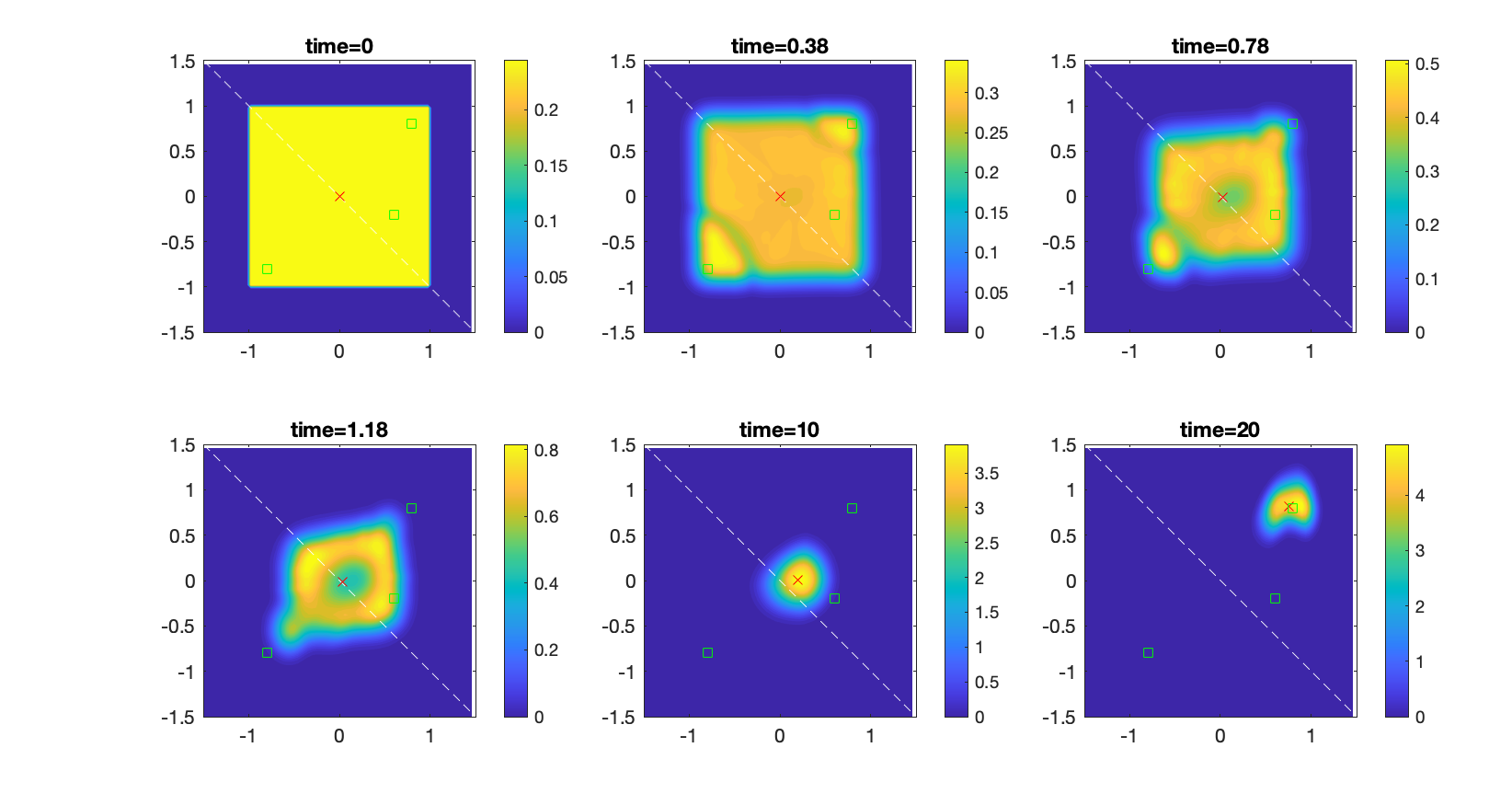}
\end{tabular}
\end{center}
\caption{Test 5.a. 'Having a moderate ally' strategy: Evolution of the voter density at $t=0, 0.38, 0.78, 1.18, 10, 20$. The green squares depict the positions of the three candidates, the red cross indicates the median voter position, and the dotted white line represents the boundary between the victory regions of each major candidate.
}\label{10}
\end{figure}

The results are presented in Figure \ref{10}. The presence of the additional candidate $C3$ plays a crucial role in the evolution of the voters. It facilitates the formation of a large central cluster, which leans slightly more towards the victory region $\Omega_1$, where both $C1$ and $C3$ are positioned. Although this central cluster initially has a limited impact on the position of the median voter, it gradually attracts voters who initially leaned towards $C2$. Subsequently, the cluster moves, passing through the position of candidate $C3$ before eventually reaching, over an extended time horizon ($t=20$), the more extreme position of candidate $C1$.

\begin{figure}[!h]
\begin{center}
\begin{tabular}{c}
\includegraphics[width=1 \textwidth, height=.52 \textwidth]{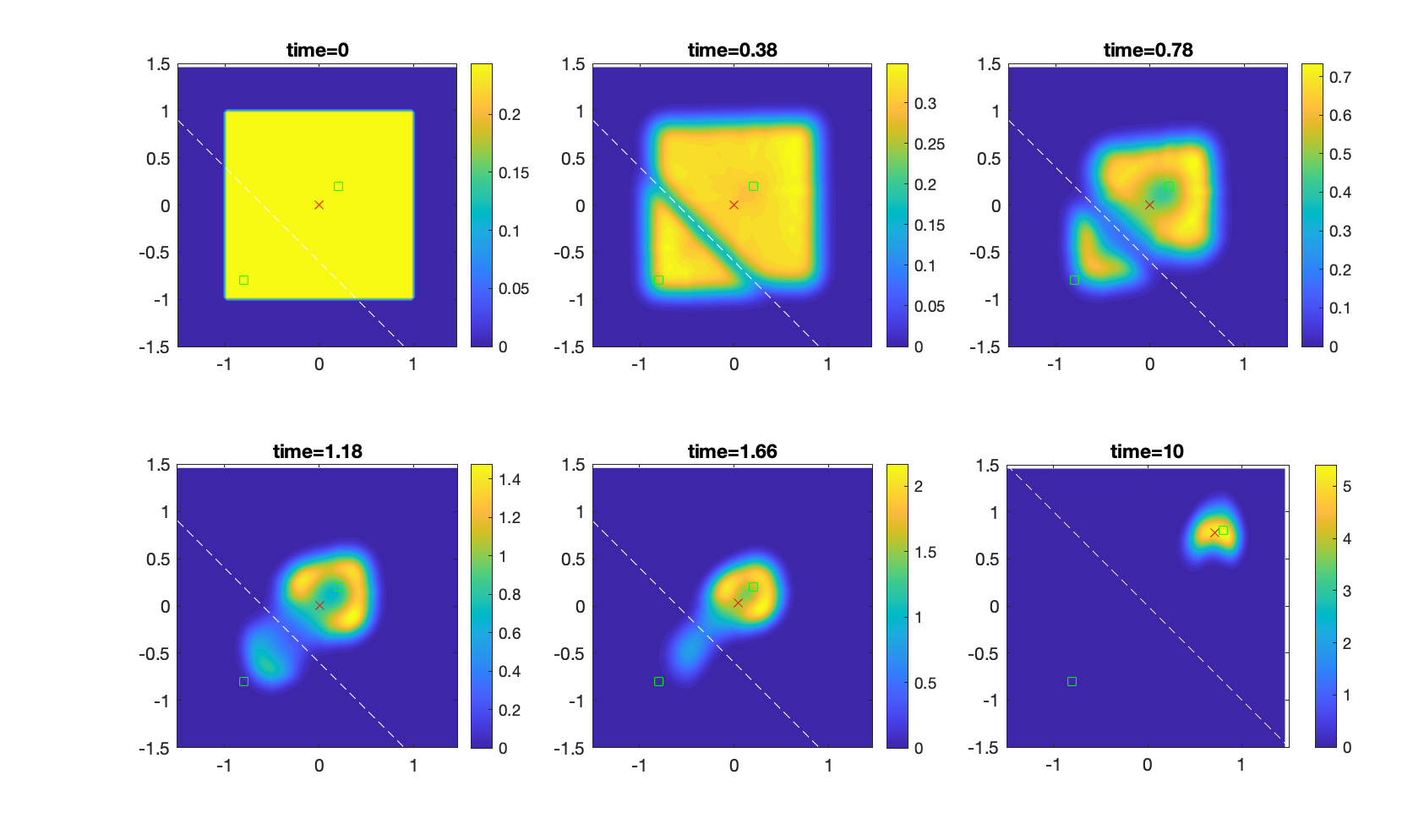}
\end{tabular}
\end{center}
\caption{Test 5.b. 'Advertising a more moderate position' strategy. Evolution of the density of voters at $t=0, 0.38, 0.78, 1.18, 1.66, 10$. The green squares are the advertised position of the two candidates (we note that one candidate changes progressively its position), the red cross is the median voter position and the dotted white line is the separation between the victory region of each candidate.}\label{11}
\end{figure}

\medskip

We explore another potential strategy: What if a candidate advertises a position that is not its final goal? In this deliberately illustrative example, we assume that the position of candidate $C1$ initially lies at the point $(0.2,0.2)$ but switches to $(0.4,0.4)$ at $t=2$, to $(0.6,0.6)$ at $t=4$, and finally to $(0.8,0.8)$ (the candidate's ultimate goal) at $t=6$. Figure \ref{11} visually demonstrates how this strategy immediately influences the position of the median voter, effectively guiding the crowd of voters toward the final goal of $C1$. This occurs even with a minor cluster formation before reaching the candidate $C1$'s final position. The tendency of politicians to gravitate toward the position occupied by the median voter, or more generally, toward the position favored by the electoral system, is sometimes referred to as Hotelling's law \cite{b133caf7-e88c-30dd-bb26-90feeddec434}. This test can also be viewed as a continuous version of the McKelvey–Schofield theorem \cite{MCKELVEY1976472}, where the authors observe that any position on a multidimensional political spectrum can be reached by a sequence of majority votes.

\section{Conclusions}\label{s4}
In this work, we discuss the fundamental features of a general FF-MFG, illustrating how such a framework can be utilized to formulate mathematical models in scenarios where a large population of rational individuals is influenced by their past position and control. This framework is particularly suitable for social mathematical models, allowing the inclusion of non-local interaction terms, diffusion effects, complex strategic behaviors, agent stubbornness, and more.

\appendix
\section{A semi-la\-gran\-gian scheme for  FF-MFG}
\label{app:num.tools}
In this appendix we provide more details about the numerical schemes that we used for approximating the FF-MFG system. To simplify a bit the presentation we consider the case where $\Omega\subset\R^2$, $A(x)$ is a constant, i.e. $A(x)\equiv \varepsilon$ and the Hamiltonian has the form \eqref{hamiltonian}. The general case is straightforward and does not change the treatment of the non-linear part of the equations. 

Our numerical approach relies on the relation between the HJ framework and the corresponding adjoint FP equation. Given a semi-discrete (discrete in space) numerical scheme for the HJ part of \eqref{MFG}, the same scheme can be used to construct a consistent approximation for the FP equation.

Before proceeding, we define additional notation. Let $\Omega_{\Delta x}$ be an uniform grid on $\Omega$ with constant discretization parameter $\Delta x>0$. Let $x_{i,j}$ denote a generic point in $\Omega_{\Delta x}$.  The space of  grid functions defined on  $\Omega_{\Delta x}$ is denoted by $\mathcal{G}(\Omega_{\Delta x})$, and the functions $U$ and $M$ in $\mathcal{G}(\Omega_{\Delta x})$ (approximations of respectively $u$ and $m$)  are called $U_{i,j}$ and $M_{i,j}$, when evaluated at $x_{i,j}$.

We utilize a semi-discrete numerical scheme $N(x,p):\Omega_{\Delta x}\times \RR^d\rightarrow \mathbb R$ monotone and consistent to approximate the operator $H(x,p)$, such that $U$ is the solution of the ODE  
\begin{equation}
U_t=N(x,\mathcal D U),
\end{equation}
where $\mathcal D U$ is a discretization of the gradient operator on $U$.
Thanks to the adjoint structure, we modify this scheme to approximate the FP part. The discrete approximation, $M$, is the solution of the following system of ODE 
$$ M_t = K(x,\mathcal D U,M),$$
where
\begin{equation}\label{fp3}
K(x,\mathcal D U,M):=(D_p N(x,\mathcal D U))^T M-\varepsilon \Delta_d M.
\end{equation}
Here, the nonlinear part of the operator corresponds to
the discrete operator $D_p N(x,\mathcal D U)$;  $\Delta_d M$ is a discretization of the Laplacian. We note that this operator depends on the monotone approximation scheme used to discretize the HJ equation, and can be computed numerically or using a symbolic differentiation operator. We stress that the features of positivity and mass conservation are still holding also at the discrete level. 
\paragraph{Semi-Lagrangian scheme.}
To describe a semi-Lagrangian scheme appropriate to approximate \eqref{MFG}, we introduce the operator 

\begin{equation}
\mathcal D^{\alpha}U_{i,j}:=\max_{\alpha\in \R^2}\frac{\mathcal I[U](x_{i,j},\alpha)-U(x_{i,j})}{h}-\ell(x_{i,j},\alpha),
\end{equation}
where $h$ a parameter of the same order of $\sqrt{\Delta x}$ and 
\begin{equation} \label{char}
\mathcal I [U](x_{i,j},\alpha)=\frac{1}{2}\sum_{i=1}^2 \left( \mathbb I[U](x_{i,j}+\alpha h +e_i\sqrt{2\varepsilon h})\right.\\
\left.+\mathbb I[U](x_{i,j}+\alpha h -e_i\sqrt{2\varepsilon h})\right).
\end{equation}
Here, $\mathbb I[u](x)$ is an interpolation operator on the matrix $U$, and $e_i$ is the $i$ unitary vector of an orthonormal basis of the space. The discrete operator is then simply
\begin{equation*} 
N(x,\mathcal D U^{\alpha}_{i,j})=F(t,x_{i,j},M)+(\mathcal D^{\alpha}U_{i,j})^\alpha.
\end{equation*}
We take the adjoint of the linearized of $N$ 
and we use it into \eqref{fp3}, 
\begin{multline*}
K(x_{i,j},(\mathcal D^\alpha U]_{i,j},M_{i,j})=\\
\frac{1}{\Delta x} \left(M_{i,j}\frac{\partial N}{\partial p_1}(x_{i,j},[\mathcal D^\alpha U]_{i,j})-M_{i-1,j}\frac{\partial N}{\partial p_1}(x_{i-1,j},[\mathcal D^\alpha U]_{i-1,j})\right.\\
+M_{i+1,j}\frac{\partial N}{\partial p_2}(x_{i+1,j},[\mathcal D^\alpha U]_{i+1,j})-M_{i,j}\frac{\partial N}{\partial p_2}(x_{i,j},[\mathcal D^\alpha U]_{i,j})\\
+M_{i,j}\frac{\partial N}{\partial p_3}(x_{i,j},[\mathcal D^\alpha U]_{i,j})-M_{i,j-1}\frac{\partial N}{\partial p_3}(x_{i,j-1},[\mathcal D^\alpha U]_{i,j-1})\\
\left.+M_{i,j+1}\frac{\partial N}{\partial p_4}(x_{i,j+1},[\mathcal D^\alpha U]_{i,j+1})-M_{i,j}\frac{\partial N}{\partial p_4}(x_{i,j},[\mathcal D^\alpha U]_{i,j})  \right).
\end{multline*}
We note that the operator $N(x,p)$ is for contruction monotone. \\
For more details cf. \cite{CS17,FF13,FGV18}.

\section*{Declarations}
\paragraph{Funding and/or Conflicts of interests/Competing interests}
AF is a member of the 'Gruppo Nazionale per il Calcolo Scientifico' INdAM and he was partially supported by PRIN project 2022 funds 2022238YY5
"Optimal control problems: analysis, approximation ". The authors have no relevant financial or non-financial interests to disclose. 
\paragraph{Author Contributions}
All authors contributed to the study conception and design. All authors read and approved the final manuscript.
\paragraph{Data Availability} No data associated in the manuscript.
\paragraph{Conflict of interest} Not applicable.
\paragraph{Compliance with Ethical Standards}
The research does not involve human participants or animals.

\end{document}